\documentclass[11pt, final]{article}
\usepackage{a4}
\usepackage{amsmath}%
\usepackage{amstext}%
\usepackage{amssymb}%
\usepackage{showkeys}%
\usepackage{epsfig}%
\usepackage{cite}
\usepackage{tikz}
\usepackage{subfig}
\usepackage{caption}
\usepackage{multirow}
\usepackage{booktabs}
\usepackage{floatrow}

\usepackage{listings}
\setcounter{MaxMatrixCols}{10}

\usepackage{geometry}
 \geometry{a4paper,
 left=25mm,
 right=25mm,
 top=27mm,
 bottom=30mm,
 }

\newtheorem{theorem}{Theorem}

\newtheorem{axiom}{Axiom}

\newtheorem{corollary}[theorem]{Corollary}

\newtheorem{definition}[axiom]{Definition}

\newtheorem{lemma}[theorem]{Lemma}

\newenvironment{remark}{\rem\rm}{\endrem}

\newcounter{unnumber}

\newenvironment{proof}{\prf\rm}{\hfill{$\blacksquare$}\endprf}
\newcommand{\R}{\mathbb{R}}%
\newcommand{\N}{\mathbb{N}}%
\newcommand{\ol}{\overline}%
\newcommand{\ul}{\underline}%

\renewcommand{\>}{\right\rangle}
\newcommand{\<}{\left\langle}

\DeclareMathOperator*\dom{dom}%
\DeclareMathOperator*\gr{Gr}%
\DeclareMathOperator*\id{Id}%
\DeclareMathOperator*\prox{prox}%
\DeclareMathOperator*\argmin{argmin}

\DeclareMathOperator*\zer{Zer}
\DeclareMathOperator*\loc{loc}
\DeclareMathOperator*\fix{Fix}

\title{Second order forward-backward dynamical systems for monotone inclusion problems}

\author{Radu Ioan Bo\c{t} \thanks{University of Vienna, Faculty of Mathematics, Oskar-Morgenstern-Platz 1, A-1090 Vienna, Austria,
email: radu.bot@univie.ac.at.} \and
Ern\"{o} Robert Csetnek \thanks {University of Vienna, Faculty of Mathematics, Oskar-Morgenstern-Platz 1, A-1090 Vienna, Austria,
email: ernoe.robert.csetnek@univie.ac.at. Research supported by FWF (Austrian Science Fund), Lise Meitner Programme, project M 1682-N25.}}

\begin{document}
\maketitle

\noindent \textbf{Abstract.} We begin by considering second order dynamical systems of the from $\ddot x(t) + \gamma(t)\dot x(t) + \lambda(t)B(x(t))=0$, 
where $B: {\cal H}\rightarrow{\cal H}$ is a cocoercive operator defined on a real Hilbert space ${\cal H}$, 
$\lambda:[0,+\infty)\rightarrow [0,+\infty)$ is a relaxation function  and $\gamma:[0,+\infty)\rightarrow [0,+\infty)$ a damping function, both depending on time. For the generated trajectories, we show existence and uniqueness of 
the generated trajectories as well as their weak asymptotic convergence to a zero of the operator $B$.
The framework allows to address from similar perspectives second order dynamical systems associated with the problem of finding zeros of the sum of a maximally 
monotone operator and a cocoercive one. This captures as particular case the minimization of the sum of a nonsmooth 
convex function with a smooth convex one. Furthermore, we prove that when $B$ is the gradient of a smooth convex function the value of the latter 
converges along the ergodic trajectory to its minimal value with a rate of ${\cal O}(1/t)$. \vspace{1ex}

\noindent \textbf{Key Words.}  dynamical systems, monotone inclusions, convex optimization problems, continuous forward-backward method \vspace{1ex}

\noindent \textbf{AMS subject classification.} 34G25, 47J25, 47H05, 90C25

\section{Introduction and preliminaries}\label{sec-intr}

This paper is motivated by the heavy ball with friction dynamical system
\begin{equation}\label{heavy-ball}\ddot x + \gamma\dot x + \nabla f(x) = 0,\end{equation} which is a nonlinear oscillator with damping $\gamma > 0$ and potential $f:{\cal H}\rightarrow\R$, 
supposed to be a convex and differentiable function defined on the real Hilbert space ${\cal H}$. The system \eqref{heavy-ball} is a simplified version of the differential system describing the motion of 
a heavy ball that keeps rolling over the graph of the function $f$ under its own inertia until friction stops it at a critical point of $f$ 
(see \cite{att-g-r}). Motivated by different models of friction, in \cite{aac, cabot} a generalized version of \eqref{heavy-ball} has been investigated in finite-dimensional spaces, by replacing the damping $\gamma \dot x$ with $\partial \Phi(\dot x)$, which is
the convex subdifferential of a convex function $\Phi : \R^n \rightarrow \R$ at $\dot x$.

The second order dynamical system \eqref{heavy-ball} has been considered by several authors in the context of minimizing the function $f$, 
these  investigations being either concerned with the asymptotic convergence of the generated trajectories to a critical point of $f$ or with the 
convergence of the function value along the trajectories to its global minimum value (see \cite{alvarez2000, att-g-r, antipin, att-alv}). It is also worth 
to mention that the time discretization of the heavy ball with friction dynamical system leads to so-called inertial type algorithms, 
which are numerical schemes sharing the feature that the current iterate of the generated sequence is defined by making use of the previous two iterates (see, for instance, 
\cite{alvarez2000, alv-att-sva, alvarez2004, b-c-h-inertial, moudafi-oliny2003, pp}). 

In order to approach the minimization of $f$ over a nonempty, convex and closed set $C \subseteq {\cal H}$, the gradient-projection second order dynamical system 
\begin{equation}\label{gr-pr}\ddot x + \gamma\dot x + x-P_C(x-\eta\nabla f (x)) = 0\end{equation}
has been considered, where  $P_C : {\cal H} \rightarrow C$ denotes the projection onto the set $C$ and $\eta>0$. Convergence statements for the trajectories to
a global minimizer of $f$ over $C$ have been provided in  \cite{att-alv, antipin}. Furthermore, in \cite{att-alv}, these investigations have been expanded to more general 
second order dynamical systems of the form
\begin{equation}\label{alv-att-nonexp-intr}\ddot x + \gamma\dot x + x-Tx = 0,\end{equation}
where $T:{\cal H} \rightarrow{\cal H}$ is a nonexpansive operator. It has been shown that when $\gamma^2 > 2$ the trajectory of \eqref{alv-att-nonexp} converges weakly to an element in the 
fixed points set of $T$, provided the latter is nonempty.

In this manuscript, we first treat the second order dynamical system 
\begin{equation}\label{paper-coc}\ddot x(t) + \gamma(t) \dot x(t) + \lambda(t)B(x(t))=0,\end{equation}
where $B: {\cal H}\rightarrow{\cal H}$ is a cocoercive operator, 
$\lambda:[0,+\infty)\rightarrow [0,+\infty)$ is a relaxation function in time and $\gamma:[0,+\infty)\rightarrow [0,+\infty)$ is a continuous damping parameter. 
We refer the reader to  
\cite{su-boyd-candes, att-ch, att-p-r, cabot-engler-gadat, cabot-engler-gadat-tams} for other works where second order 
differential equations with time dependent damping  
have been considered and investigated in connection with optimization problems. 
On the other hand, second order dynamical systems governed by cocoercive operators have been recently considered also in \cite{att-maing}, however, with constant relaxation and 
damping functions.  The existence and uniqueness of strong global solutions for \eqref{paper-coc} is obtained by applying the classical Cauchy-Lipschitz-Picard Theorem 
(see \cite{haraux}). We also show that under mild assumptions on the relaxation function the trajectory $x(t)$ converges weakly as $t \rightarrow +\infty$ 
to a zero of the operator $B$, provided the latter has a nonempty set of zeros. To this end we use the continuous version of the Opial Lemma 
(see also \cite{att-alv, antipin, alvarez2000}, where similar techniques have been used). 

Further, we approach the problem of finding a zero of the sum of a maximally monotone operator and a cocoercive one via a second order dynamical system formulated by 
making use of the resolvent of the set-valued operator, see \eqref{dyn-syst-fb}. 
Dynamical systems of implicit type have been already considered in the literature in 
\cite{abbas-att-arx14, bolte-2003, b-c-dyn-KM, b-c-dyn-pen, att-sv2011, abbas-att-sv, att-alv-sv}. We specialize these investigations to the minimization of the sum of a  
nonsmooth convex function with a smooth convex function, which is approached by means of a second order dynamical system of forward-backward type. This fact allows us to recover and improve 
results given in \cite{att-alv, antipin} in the context of studying \eqref{gr-pr}. We also emphasize the fact that the explicit discretization of the second order forward-backward dynamical system gives rise
to a relaxed forward-backward algorithm with inertial effects. By approaching minimization problems from continuous perspective we expect to gain more insights into how to properly chose the relaxation and damping parameters in the corresponding iterative schemes in order to 
improve their convergence behavior. Finally, whenever $B$ is the gradient of a smooth convex function we
show that the value of the latter converges along the ergodic trajectories generated by \eqref{paper-coc}  to its minimum value with a rate of convergence of  ${\cal O}(1/t)$. 
                                                       
Throughout this paper $\N= \{0,1,2,...\}$ denotes the set of nonnegative integers and ${\cal H}$ a real Hilbert space with inner product
$\langle\cdot,\cdot\rangle$ and corresponding norm $\|\cdot\|=\sqrt{\langle \cdot,\cdot\rangle}$.

\section{Existence and uniqueness of strong global solutions}\label{sec2}

This section is devoted to the study of existence and uniqueness of strong global solutions of a second order dynamical 
system governed by Lipschitz continuous operators. 

Let $B:{\cal H}\rightarrow {\cal H}$ be an $L$-Lipschitz continuous operator (that is $L \geq 0$ and $\|B x-B y\|
\leq L \|x-y\|$ for all $x,y\in{\cal H}$), $\lambda,\gamma:[0,+\infty)\rightarrow [0,+\infty)$ be Lebesgue measurable functions, 
$u_0,v_0\in {\cal H}$ and consider the dynamical system
\begin{equation}\label{dyn-syst}\left\{
\begin{array}{ll}
\ddot x(t) + \gamma(t)\dot x(t) + \lambda(t)B(x(t))=0\\
x(0)=u_0, \dot x(0)=v_0.
\end{array}\right.\end{equation}

As in \cite{att-sv2011, abbas-att-sv}, we consider the following definition of an absolutely continuous function.

\begin{definition}\label{abs-cont} \rm (see, for instance, \cite{att-sv2011, abbas-att-sv}) A function $x:[0,b]\rightarrow {\cal H}$ (where $b>0$) is said to be absolutely continuous if one of the 
following equivalent properties holds:

(i)  there exists an integrable function $y:[0,b]\rightarrow {\cal H}$ such that $$x(t)=x(0)+\int_0^t y(s)ds \ \ \forall t\in[0,b];$$

(ii) $x$ is continuous and its distributional derivative $\dot x$ is Lebesgue integrable on $[0,b]$; 

(iii) for every $\varepsilon > 0$, there exists $\eta >0$ such that for any finite family of intervals $I_k=(a_k,b_k)$ we have the implication
$$\left(I_k\cap I_j=\emptyset \mbox{ and }\sum_k|b_k-a_k| < \eta\right)\Longrightarrow \sum_k\|x(b_k)-x(a_k)\| < \varepsilon.$$
\end{definition}

\begin{remark}\label{rem-abs-cont}\rm (a) It follows from the definition that an absolutely continuous function is differentiable almost 
everywhere, its derivative coincides with its distributional derivative almost everywhere and one can recover the function from its derivative $\dot x=y$ by the integration formula (i). 

(b) If $x:[0,b]\rightarrow {\cal H}$ (where $b>0$) is absolutely continuous, then the function $z=B\circ x$ is absolutely continuous, too. This can be easily seen by using the characterization of absolute continuity in
Definition \ref{abs-cont}(iii). Moreover, $z$ is almost everywhere differentiable and the inequality $\|\dot z (\cdot)\|\leq L\|\dot x(\cdot)\|$ holds almost everywhere.   
\end{remark}

\begin{definition}\label{str-sol}\rm We say that $x:[0,+\infty)\rightarrow {\cal H}$ is a strong global solution of \eqref{dyn-syst} if the 
following properties are satisfied: 

(i) $x,\dot x:[0,+\infty)\rightarrow {\cal H}$ are locally absolutely continuous, in other words, absolutely continuous on each interval $[0,b]$ for $0<b<+\infty$; 

(ii) $\ddot x(t) + \gamma(t)\dot x(t) + \lambda(t)B(x(t))=0$ for almost every $t\in[0,+\infty)$;

(iii) $x(0)=u_0$ and $\dot x(0)=v_0$.
\end{definition}

For proving the existence and uniqueness of strong global solutions of \eqref{dyn-syst} we use the Cauchy-Lipschitz-Picard Theorem for absolutely continues trajectories. The key observation here is that one can rewrite \eqref{dyn-syst} 
as a particular first order dynamical system in a suitably chosen product space (see also \cite{alv-att-bolte-red}). 

\begin{theorem}\label{existence-th} Let $B:{\cal H}\rightarrow {\cal H}$ be an $L$-Lipschitz continuous operator and 
$\lambda,\gamma:[0,+\infty)\rightarrow [0,+\infty)$ be Lebesgue measurable functions such that $\lambda,\gamma \in L^1_{\loc}([0,+\infty))$ 
(that is $\lambda,\gamma \in L^1([0,b])$ for every $0<b<+\infty$). Then for each $u_0,v_0\in {\cal H}$ there exists a unique 
strong global solution of the dynamical system \eqref{dyn-syst}.  
\end{theorem}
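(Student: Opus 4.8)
The plan is to reduce the second order system \eqref{dyn-syst} to a first order system in the product Hilbert space ${\cal G}={\cal H}\times{\cal H}$ and then invoke the Cauchy-Lipschitz-Picard theorem in the form valid for locally absolutely continuous trajectories (see \cite{haraux}). First I would endow ${\cal G}$ with the inner product $\langle(u_1,v_1),(u_2,v_2)\rangle=\langle u_1,u_2\rangle+\langle v_1,v_2\rangle$ and set $X(t)=(x(t),\dot x(t))$. Writing $F:[0,+\infty)\times{\cal G}\rightarrow{\cal G}$ as
$$F(t,(u,v))=\big(v,\,-\gamma(t)v-\lambda(t)B(u)\big),$$
the system \eqref{dyn-syst} becomes equivalent to the first order problem $\dot X(t)=F(t,X(t))$, $X(0)=(u_0,v_0)$, in the sense that $x$ is a strong global solution of \eqref{dyn-syst} if and only if $X$ is a locally absolutely continuous solution of the reformulation. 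The two components of $\dot X=(\dot x,\ddot x)$ correspond exactly to the first-order rewriting of the equation.

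Next I would verify the hypotheses of the Cauchy-Lipschitz-Picard theorem. For the Lipschitz property in the state variable, using that $B$ is $L$-Lipschitz together with $\|v_1-v_2\|\leq\|(u_1,v_1)-(u_2,v_2)\|$ and $\|u_1-u_2\|\leq\|(u_1,v_1)-(u_2,v_2)\|$, one estimates for almost every $t$ and all $(u_1,v_1),(u_2,v_2)\in{\cal G}$
$$\|F(t,(u_1,v_1))-F(t,(u_2,v_2))\|\leq\big(1+\gamma(t)+L\lambda(t)\big)\,\|(u_1,v_1)-(u_2,v_2)\|,$$
so that the Lipschitz constant $\theta(t)=1+\gamma(t)+L\lambda(t)$ belongs to $L^1_{\loc}([0,+\infty))$ by the standing assumptions on $\lambda$ and $\gamma$. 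For measurability and integrability in time, for each fixed $(u,v)$ the map $t\mapsto F(t,(u,v))$ is measurable (since $\gamma,\lambda$ are) and lies in $L^1_{\loc}$ because $\gamma,\lambda\in L^1_{\loc}$.

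Finally I would address global existence, which is the only point requiring care beyond local existence. The bound $\|B(u)\|\leq L\|u\|+\|B(0)\|$ yields the affine growth estimate
$$\|F(t,(u,v))\|\leq\big(1+\gamma(t)+L\lambda(t)\big)\|(u,v)\|+\lambda(t)\|B(0)\|,$$
with both coefficients lying in $L^1_{\loc}([0,+\infty))$. This linear growth, fed into a Gronwall argument, prevents finite-time blow-up and guarantees that the unique local solution extends to all of $[0,+\infty)$. The main obstacle is precisely this step: without the affine growth bound one would only obtain a maximal solution on some interval $[0,T_{\max})$, and it is the $L^1_{\loc}$ integrability of $\gamma$ and $\lambda$ together with the global Lipschitz character of $B$ that forces $T_{\max}=+\infty$. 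Uniqueness then follows directly from the Lipschitz estimate on $F$ via the standard Gronwall comparison between two solutions sharing the same initial data.
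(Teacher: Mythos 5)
Your proposal is correct and follows essentially the same route as the paper: rewrite \eqref{dyn-syst} as a first order system in ${\cal H}\times{\cal H}$, verify that $F(t,\cdot)$ is Lipschitz with an $L^1_{\loc}$ Lipschitz constant and that $F(\cdot,(u,v))\in L^1([0,b],{\cal H}\times{\cal H})$ for every $b>0$, and invoke the Cauchy--Lipschitz--Picard theorem. The only difference is your added affine-growth/Gronwall continuation argument, which is harmless but redundant: the version of the theorem the paper cites (\cite[Proposition 6.2.1]{haraux}, for absolutely continuous trajectories with a Lipschitz condition global in the state variable and $L^1_{\loc}$ in time) already delivers global existence and uniqueness directly.
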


\begin{proof} 
The system \eqref{dyn-syst} can be equivalently written as a first order dynamical system in the phase space ${\cal H}\times {\cal H}$
\begin{equation}\label{dyn-syst-eq}\left\{
\begin{array}{ll}
\dot Y(t)=F(t,Y(t))\\
Y(0)=(u_0,v_0),
\end{array}\right.\end{equation}
with 
$$Y:[0,+\infty)\rightarrow {\cal H}\times {\cal H}, \ Y(t)=(x(t),\dot x(t))$$ 
and 
$$F:[0,+\infty)\times{\cal H}\times {\cal H}\rightarrow {\cal H}\times {\cal H}, \ F(t,u,v)=(v,-\gamma(t) v-\lambda(t)Bu).$$ 

We endow ${\cal H}\times {\cal H}$ with scalar product $\langle (u,v),(\ol u,\ol v)\rangle_{{\cal H}\times {\cal H}}= \langle u,\ol u\rangle+\langle v,\ol v\rangle$ and corresponding norm 
$\|(u,v)\|_{{\cal H}\times {\cal H}}=\sqrt{\|u\|^2+\|v\|^2}$. 

(a) For arbitrary $u,\ol u,v,\ol v\in {\cal H}$, by using the Lipschitz continuity of the involved operators, we obtain 
\begin{align*}
\|F(t,u,v)-F(t,\ol u,\ol v)\|_{{\cal H}\times {\cal H}} & = \sqrt{\|v-\ol v\|^2+\|\gamma(t)( \ol v- v) + \lambda(t)(B\ol u-Bu)\|^2}\\
& \leq \sqrt{(1+2\gamma^2(t))\|v-\ol v\|^2 + 2L^2\lambda^2(t)\|u-\ol u\|^2}\\
& \leq \sqrt{1+2\gamma^2(t)+2L^2\lambda^2(t)}\|(u,\ol u)-(v,\ol v)\|_{{\cal H}\times {\cal H}}\\
& \leq (1+\sqrt{2}\gamma(t)+\sqrt{2} L\lambda(t))\|(u,\ol u)-(v,\ol v)\|_{{\cal H}\times {\cal H}} \ \forall t \geq 0.
\end{align*}
As $\lambda,\gamma \in L^1_{\loc}([0,+\infty))$, the Lipschitz constant of $F(t,\cdot,\cdot)$ is locally integrable.  

(b) Next we show that 
\begin{equation}\label{lip-th-2}\forall u,v\in{\cal H}, \ \forall b>0, \ \ F(\cdot,u,v)\in L^1([0,b],{\cal H}\times {\cal H}).\end{equation}
For arbitrary $u,v\in {\cal H}$ and $b>0$ it holds 
\begin{align*}
\int_0^b\|F(t,u,v)\|_{{\cal H}\times {\cal H}}dt = &  \int_0^b\sqrt{\|v\|^2+\|\gamma(t) v+\lambda(t)Bu\|^2}dt\\
\leq & \int_0^b\sqrt{(1+2\gamma^2(t))\|v\|^2+2\lambda^2(t)\|Bu\|^2}dt\\
\leq & \int_0^b\left((1+\sqrt{2}\gamma(t))\|v\|+\sqrt{2}\lambda(t)\|Bu\|\right)dt
\end{align*}
and from here, by using the assumptions made on $\lambda,\gamma$, \eqref{lip-th-2} follows.

In the light of the statements (a) and (b), the existence and uniqueness of a strong global solution for \eqref{dyn-syst-eq} follow from the Cauchy-Lipschitz-Picard Theorem for 
first order dynamical systems (see, for example, \cite[Proposition 6.2.1]{haraux}). The conclusion is a consequence of the equivalence of \eqref{dyn-syst} and \eqref{dyn-syst-eq}.
\end{proof}

\section{Convergence of the trajectories}\label{sec3}

In this section we address the convergence properties of the trajectories generated by the dynamical system \eqref{dyn-syst} by assuming that 
$B : {\cal H} \rightarrow {\cal H}$ is {\it $\beta$-cocoercive} for $\beta > 0$, 
that is $\beta \|Bx - By\|^2 \leq \langle x-y, Bx-By \rangle$ for all $x,y \in {\cal H}$. This implies that $B$ is $\frac{1}{\beta}$-Lipschitz continuous. If $B = \nabla g$, where $g : {\cal H} \rightarrow \R$ is a convex
and differentiable function such that $\nabla g$ is $\frac{1}{\beta}$-Lipschitz continuous, then the reverse implication holds, too. Indeed, according to the Baillon-Haddad Theorem, 
$\nabla g$ is a $\beta$-cocoercive operator (see \cite[Corollary 18.16]{bauschke-book}).

The following results, which can be interpreted as continuous versions of the quasi-Fej\'er monotonicity for sequences, plays an important role in the forthcoming investigations. For their proofs we refer the reader  to 
\cite[Lemma 5.1]{abbas-att-sv} and \cite[Lemma 5.2]{abbas-att-sv}, respectively.

\begin{lemma}\label{fejer-cont1} Suppose that $F:[0,+\infty)\rightarrow\R$ is locally absolutely continuous and bounded below and that
there exists $G\in L^1([0,+\infty))$ such that for almost every $t \in [0,+\infty)$ $$\frac{d}{dt}F(t)\leq G(t).$$ 
Then there exists $\lim_{t\rightarrow \infty} F(t)\in\R$. 
\end{lemma}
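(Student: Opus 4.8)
The plan is to reduce everything to the elementary fact that a non-increasing real function that is bounded below must converge. The trick is to absorb the integral of $G$ into an auxiliary function that is manifestly monotone.

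First I would introduce $H:[0,+\infty)\rightarrow\R$ defined by
$$H(t) = F(t) - \int_0^t G(s)\,ds.$$
Since $F$ is locally absolutely continuous and $t\mapsto\int_0^t G(s)\,ds$ is absolutely continuous (because $G\in L^1([0,+\infty))$), the function $H$ is locally absolutely continuous, hence differentiable almost everywhere with $\dot H(t)=\dot F(t)-G(t)$. The hypothesis $\frac{d}{dt}F(t)\leq G(t)$ then gives $\dot H(t)\leq 0$ for almost every $t$. I would next deduce that $H$ is non-increasing: for $0\leq s\leq t$, the integration formula of Definition \ref{abs-cont}(i) applied to $\dot H$ yields $H(t)-H(s)=\int_s^t\dot H(\tau)\,d\tau\leq 0$.

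The remaining ingredient is a lower bound on $H$. Since $G\in L^1([0,+\infty))$, the quantity $\int_0^t G(s)\,ds$ converges to the finite limit $\int_0^{+\infty}G(s)\,ds$ as $t\rightarrow+\infty$, and moreover $\bigl|\int_0^t G(s)\,ds\bigr|\leq\int_0^{+\infty}|G(s)|\,ds<+\infty$ uniformly in $t$. Combining this with the assumption that $F$ is bounded below, say $F(t)\geq m$ for all $t$, I obtain $H(t)\geq m-\int_0^{+\infty}|G(s)|\,ds$, so $H$ is bounded below. A non-increasing function bounded below has a finite limit as $t\rightarrow+\infty$, and therefore
$$\lim_{t\rightarrow+\infty}F(t)=\lim_{t\rightarrow+\infty}H(t)+\int_0^{+\infty}G(s)\,ds\in\R.$$

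I do not expect a genuine analytic obstacle here. The only point demanding care is the passage from the almost-everywhere sign condition $\dot H\leq 0$ to the monotonicity of $H$: this is precisely where local absolute continuity is indispensable, since a merely a.e.\ non-positive derivative does not by itself force a function to be non-increasing (the Cantor function is the standard cautionary example). Everything else is a routine use of the fundamental theorem of calculus for absolutely continuous functions together with the integrability of $G$.
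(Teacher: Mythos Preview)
Your argument is correct and is the standard way to establish this lemma. Note, however, that the paper does not give its own proof of Lemma~\ref{fejer-cont1}: it simply refers the reader to \cite[Lemma 5.1]{abbas-att-sv}. So there is no in-paper proof to compare against; your self-contained argument would in fact strengthen the exposition.
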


\begin{lemma}\label{fejer-cont2}  If $1 \leq p < \infty$, $1 \leq r \leq \infty$, $F:[0,+\infty)\rightarrow[0,+\infty)$ is 
locally absolutely continuous, $F\in L^p([0,+\infty))$, $G:[0,+\infty)\rightarrow\R$, $G\in  L^r([0,+\infty))$ and 
for almost every $t \in [0,+\infty)$ $$\frac{d}{dt}F(t)\leq G(t),$$ then $\lim_{t\rightarrow +\infty} F(t)=0$. 
\end{lemma}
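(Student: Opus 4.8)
The plan is to argue by contradiction, exploiting the one-sided derivative bound to show that $F$ cannot repeatedly climb back to a positive level without destroying its $L^p$-integrability. Note that one cannot simply invoke Lemma \ref{fejer-cont1} to first produce a limit, since $G$ need not be globally integrable when $r>1$; so I would instead establish directly that $\limsup_{t\to+\infty}F(t)=0$, which together with $F\geq 0$ yields $\lim_{t\to+\infty}F(t)=0$.

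First I would record the basic consequence of the hypothesis. Since $F$ is locally absolutely continuous, for all $0\leq s\leq t$ the fundamental theorem of calculus together with $\frac{d}{dt}F\leq G$ gives
\begin{equation*}
F(s)\geq F(t)-\int_s^t G(\tau)\,d\tau\geq F(t)-\int_s^t|G(\tau)|\,d\tau.
\end{equation*}
The content of this inequality is that, moving backward in time from $t$ to $s$, the function $F$ can only decrease slowly, the possible decrease being controlled by the mass of $|G|$ on $[s,t]$.

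Next, suppose for contradiction that $\limsup_{t\to+\infty}F(t)=2\varepsilon>0$. Then there is a strictly increasing sequence $t_n\to+\infty$ with $F(t_n)\geq\varepsilon$ for every $n$. The key step is to find a fixed length $\delta_0>0$ for which $F\geq \varepsilon/2$ on the whole interval $[t_n-\delta_0,t_n]$ for all large $n$; by the displayed inequality it suffices to bound $\int_{t_n-\delta_0}^{t_n}|G|$ by $\varepsilon/2$. Here I would use $G\in L^r$ through H\"older's inequality, which for an interval of length $\delta$ yields $\int_{t_n-\delta}^{t_n}|G|\leq \|G\|_{L^r([t_n-\delta,t_n])}\,\delta^{1-1/r}$. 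When $r<+\infty$ the tail $\int_{t_n-1}^{+\infty}|G|^r$ tends to $0$, so the choice $\delta_0=1$ makes the right-hand side smaller than $\varepsilon/2$ for all large $n$; when $r=+\infty$ the bound reads $\|G\|_{L^\infty}\,\delta$, so a single small choice of $\delta_0$ works for every $n$. In either case one secures the lower bound $F\geq\varepsilon/2$ on intervals of a common positive length.

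Finally I would pass to a subsequence so that the intervals $[t_n-\delta_0,t_n]$ are pairwise disjoint (possible because $t_n\to+\infty$) and estimate
\begin{equation*}
\int_0^{+\infty}F(t)^p\,dt\geq\sum_n\int_{t_n-\delta_0}^{t_n}F(t)^p\,dt\geq\sum_n\left(\frac{\varepsilon}{2}\right)^p\delta_0=+\infty,
\end{equation*}
contradicting $F\in L^p([0,+\infty))$. Hence $\limsup_{t\to+\infty}F(t)=0$, and since $F$ is nonnegative the conclusion $\lim_{t\to+\infty}F(t)=0$ follows. I expect the only delicate point to be the uniform control of $\int_{t_n-\delta_0}^{t_n}|G|$ across the full admissible range of $r$: the endpoint $r=1$ forces one to rely on the vanishing of the $L^1$-tail rather than on the length factor in H\"older's inequality, whereas $r=+\infty$ relies instead precisely on that length factor, so the two extreme exponents must be treated with slightly different emphasis even though the intermediate ones admit either argument.
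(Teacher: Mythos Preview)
Your argument is correct. Note, however, that the paper does not actually supply a proof of this lemma: it merely states the result and refers the reader to \cite[Lemma 5.2]{abbas-att-sv}. So there is no ``paper's proof'' to compare against; what you have written is a self-contained substitute for that citation.

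The contradiction strategy you use---pick peaks $t_n$ with $F(t_n)\geq\varepsilon$, use the one-sided bound $F(s)\geq F(t_n)-\int_s^{t_n}|G|$ to propagate the lower bound backward over an interval of fixed length $\delta_0$, then sum the resulting $L^p$-mass over disjoint intervals---is the standard route and is carried out cleanly. One small comment on your closing remark about the endpoint $r=1$: in fact your treatment of the range $r<+\infty$ already covers $r=1$ uniformly, since with $\delta_0=1$ the H\"older factor $\delta_0^{1-1/r}$ equals $1$ and the bound reduces to $\|G\|_{L^r([t_n-1,t_n])}$, which tends to zero by the vanishing-tail argument for every finite $r\geq 1$. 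So only $r=+\infty$ genuinely requires the separate length-based estimate; the $r=1$ case needs no special emphasis.
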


The next result which we recall here is the continuous version of the Opial Lemma (see, for example, \cite[Lemma 5.3]{abbas-att-sv}, \cite[Lemma 1.10]{abbas-att-arx14}). 

\begin{lemma}\label{opial} Let $S \subseteq {\cal H}$ be a nonempty set and $x:[0,+\infty)\rightarrow{\cal H}$ a given map. Assume that 

(i) for every $x^*\in S$, $\lim_{t\rightarrow+\infty}\|x(t)-x^*\|$ exists; 

(ii) every weak sequential cluster point of the map $x$ belongs to $S$. 

\noindent Then there exists $x_{\infty}\in S$ such that  $x(t)$ converges weakly to $x_{\infty}$ as $t \rightarrow +\infty$. 
\end{lemma}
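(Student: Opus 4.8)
The plan is to deduce weak convergence of the entire trajectory by the classical strategy behind Opial's lemma: boundedness guarantees the existence of weak sequential cluster points, hypothesis (ii) forces all of them into $S$, and hypothesis (i) forces them to coincide, after which I upgrade ``unique cluster point'' to genuine weak convergence.

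First I would establish boundedness. Since $S\neq\emptyset$, fix some $x^*\in S$. By (i) the map $t\mapsto\|x(t)-x^*\|$ has a finite limit as $t\to+\infty$ and is in particular bounded, so the trajectory $x(\cdot)$ is bounded on $[0,+\infty)$. Because $\mathcal H$ is a Hilbert space, bounded subsets are relatively weakly sequentially compact; hence for any sequence $t_n\to+\infty$ the bounded sequence $(x(t_n))_n$ has a weakly convergent subsequence. Consequently at least one weak sequential cluster point of $x(\cdot)$ exists, and by (ii) every such cluster point belongs to $S$.

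The key step is uniqueness of the weak sequential cluster point. Let $x_1$ and $x_2$ be two such cluster points; by (ii) both lie in $S$, so by (i) the limits $\lim_{t\to+\infty}\|x(t)-x_1\|$ and $\lim_{t\to+\infty}\|x(t)-x_2\|$ both exist in $\R$. Expanding the squared norms gives, for every $t\geq 0$,
\begin{equation*}
\|x(t)-x_1\|^2-\|x(t)-x_2\|^2 = 2\langle x(t), x_2-x_1\rangle + \|x_1\|^2-\|x_2\|^2 .
\end{equation*}
The left-hand side converges as $t\to+\infty$, hence so does $t\mapsto\langle x(t), x_1-x_2\rangle$; denote its limit by $\ell$. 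Taking a sequence $t_n\to+\infty$ with $x(t_n)\rightharpoonup x_1$ yields $\ell=\langle x_1, x_1-x_2\rangle$, whereas a sequence $s_n\to+\infty$ with $x(s_n)\rightharpoonup x_2$ yields $\ell=\langle x_2, x_1-x_2\rangle$. Subtracting, $\|x_1-x_2\|^2=\langle x_1-x_2, x_1-x_2\rangle=0$, so $x_1=x_2$.

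Finally I would pass from the unique cluster point, call it $x_\infty\in S$, to weak convergence of the whole trajectory. Arguing by contradiction, if $x(t)\not\rightharpoonup x_\infty$ there exist $y\in\mathcal H$, $\e>0$ and $t_n\to+\infty$ with $|\langle x(t_n)-x_\infty, y\rangle|\geq\e$ for all $n$; by boundedness $(x(t_n))_n$ has a weakly convergent subsequence, whose limit is a weak sequential cluster point and hence equals $x_\infty$, contradicting the inequality upon testing against $y$. Therefore $x(t)\rightharpoonup x_\infty$ as $t\to+\infty$. The only genuinely delicate point is this last passage: uniqueness of the weak sequential cluster points does not by itself imply convergence of a general trajectory, and one must invoke weak sequential compactness of bounded sets together with the contradiction argument to move from subsequences to the full limit.
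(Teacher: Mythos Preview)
Your argument is correct and is exactly the classical proof of the continuous Opial lemma: boundedness from (i), existence of weak sequential cluster points via reflexivity, uniqueness through the polarization identity applied to $\|x(t)-x_1\|^2-\|x(t)-x_2\|^2$, and the final subsequence--contradiction argument to upgrade to full weak convergence. Note, however, that the paper does not actually prove this lemma; it merely recalls the statement and refers to \cite[Lemma 5.3]{abbas-att-sv} and \cite[Lemma 1.10]{abbas-att-arx14} for the proof, so there is nothing in the paper to compare your argument against beyond observing that what you wrote is the standard proof one would find in those references.
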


In order to prove the convergence of the trajectories of \eqref{dyn-syst}, we make the following assumptions on the relaxation function $\lambda$ and 
the damping parameter $\gamma$, respectively: 
\begin{enumerate}
\item[{\rm (A1)}] $\lambda, \gamma :[0,+\infty)\rightarrow (0,+\infty)$ are locally absolutely continuous and there exists $\theta >0$ such that for almost every $t\in [0, +\infty)$ we have  
\begin{equation}\label{h3-g}\dot\gamma(t)\leq 0\leq\dot\lambda(t) \mbox{ and } \frac{\gamma^2(t)}{\lambda(t)}\geq\frac{1+\theta}{\beta}.\end{equation}
\end{enumerate}

Due to Definition \ref{abs-cont} and Remark \ref{rem-abs-cont}(a), $\dot\lambda(t),\dot\gamma(t)$ exists for almost every $t\geq 0$ and 
$\dot\lambda,\dot\gamma$ are Lebesgue integrable on each interval $[0,b]$ for $0<b<+\infty$. This combined with 
$\dot\gamma(t)\leq 0\leq\dot\lambda(t)$ and the fact that $\lambda,\gamma$ take only positive values yield 
the existence of a positive lower bound $\ul\lambda$ for $\lambda$ and of a positive upper bound $\ol\gamma$
for $\gamma$. Furthermore, the second assumption in \eqref{h3-g} provides also a positive upper bound $\ol\lambda$ for $\lambda$ and a positive 
lower bound $\ul\gamma$ for $\gamma$. Notice that the couple of functions 
$$\lambda(t)=\frac{1}{ae^{-\rho t}+b} \mbox{ and } \gamma(t)=a'e^{-\rho't}+b',$$
where $a,a',\rho,\rho'\geq 0$ and $b,b'>0$ fulfill the inequality $b'^2b>\frac{1}{\beta}$, verify  the conditions in assumption (A1).  

We would also like to point out that under the conditions considered in (A1) the global version of the Picard-Lindel\"{o}f Theorem allows us 
to conclude that, for $u_0,v_0\in {\cal H}$, there exists a unique trajectory $x:[0,+\infty)\rightarrow{\cal H}$ which is a 
$C^2$ function and which satisfies the relation (ii) in Definition 2 for every $t\in [0,+\infty)$. The considerations we make in the following take into account this fact. 

We state now the convergence result.

\begin{theorem}\label{conv-th}  Let $B: {\cal H}\rightarrow{\cal H}$ be a $\beta$-cocoercive operator for $\beta>0$ such that 
$\zer B:=\{u\in {\cal H}:Bu=0\}\neq\emptyset$, $\lambda,\gamma:[0,+\infty)\rightarrow(0,+\infty)$ be functions fulfilling {\rm (A1)} and 
$u_0,v_0\in {\cal H}$. Let $x:[0,+\infty)\rightarrow {\cal H}$ be the unique strong global solution of \eqref{dyn-syst}. 
Then the following statements are true:

(i) the trajectory $x$ is bounded and $\dot x,\ddot x,Bx\in L^2([0,+\infty); {\cal H})$;  

(ii) $\lim_{t\rightarrow+\infty}\dot x(t)=\lim_{t\rightarrow+\infty}\ddot x(t)=\lim_{t\rightarrow+\infty} B(x(t)=0$; 

(iii) $x(t)$ converges weakly to an element in $\zer B$ as $t\rightarrow+\infty$.
\end{theorem}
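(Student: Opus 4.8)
The plan is to prove a Lyapunov-type energy estimate, extract integrability of the relevant quantities, and then close via the Opial lemma.

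The endgame is the continuous Opial lemma (Lemma~\ref{opial}) applied with $S=\zer B$, so the plan is to produce, in order, the energy and integrability bounds of (i), the vanishing limits of (ii), and finally the two Opial hypotheses needed for (iii). The driving object throughout is a Lyapunov function attached to an arbitrary $x^*\in\zer B$.

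Fix $x^*\in\zer B$ and set $h(t)=\tfrac12\|x(t)-x^*\|^2$. Since $\dot\gamma\le0\le\dot\lambda$, the quantities $\ul\gamma:=\lim_{t\to\infty}\gamma(t)=\inf_t\gamma(t)>0$ and $\ol\lambda:=\lim_{t\to\infty}\lambda(t)=\sup_t\lambda(t)<\infty$ are attained in the limit; passing to the limit in $\gamma^2(t)/\lambda(t)\ge(1+\theta)/\beta$ then gives $\ul\gamma^2\ge\tfrac{1+\theta}{\beta}\ol\lambda$, hence $\ol\lambda/\beta<\ul\gamma^2$. With $\xi:=\ul\gamma/2$ I would introduce
$$\mathcal{E}(t)=\tfrac12\|\dot x(t)\|^2+\xi\langle x(t)-x^*,\dot x(t)\rangle+\xi\gamma(t)\,h(t)=\tfrac12\|\dot x(t)+\xi(x(t)-x^*)\|^2+\xi(\gamma(t)-\xi)h(t),$$
which is nonnegative because $\gamma\ge\ul\gamma>\xi$. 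Differentiating along the trajectory, substituting $\ddot x=-\gamma\dot x-\lambda Bx$, using cocoercivity in the form $\langle x-x^*,Bx\rangle\ge\beta\|Bx\|^2$ (recall $Bx^*=0$) and $\dot\gamma\le0$, the $\pm\xi\gamma\langle x-x^*,\dot x\rangle$ terms telescope and I obtain $\tfrac{d}{dt}\mathcal{E}\le(\xi-\gamma)\|\dot x\|^2-\lambda\langle Bx,\dot x\rangle-\xi\beta\lambda\|Bx\|^2$. A single Young estimate $-\lambda\langle Bx,\dot x\rangle\le\tfrac{\delta}{2}\|\dot x\|^2+\tfrac{\lambda^2}{2\delta}\|Bx\|^2$ with a constant $\delta\in(\ol\lambda/(\ul\gamma\beta),\ul\gamma)$ --- an interval that is nonempty \emph{precisely} because $\ol\lambda/\beta<\ul\gamma^2$ --- makes both resulting coefficients uniformly negative, yielding $\tfrac{d}{dt}\mathcal{E}\le-c_1\|\dot x\|^2-c_2\lambda\|Bx\|^2\le0$ for constants $c_1,c_2>0$. \emph{This step is the main obstacle}: everything hinges on the margin $\theta>0$ in (A1), which through the monotonicity of $\gamma,\lambda$ is exactly what keeps the worst case $\gamma\approx\ul\gamma$, $\lambda\approx\ol\lambda$ dissipative. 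Integrating, $\mathcal{E}$ is bounded, so $h$ and hence $x$ are bounded, and $\int_0^{\infty}(\|\dot x\|^2+\lambda\|Bx\|^2)\,dt<\infty$; since $\lambda\ge\ul\lambda>0$ this gives $\dot x,Bx\in L^2$, and then $\ddot x=-\gamma\dot x-\lambda Bx\in L^2$ because $\gamma,\lambda$ are bounded. This proves (i).

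For (ii) I would apply Lemma~\ref{fejer-cont2} twice. With $F=\|\dot x\|^2\in L^1$ and $\tfrac{d}{dt}\|\dot x\|^2=2\langle\dot x,\ddot x\rangle\le\|\dot x\|^2+\|\ddot x\|^2\in L^1$, the lemma gives $\dot x(t)\to0$; with $F=\|Bx\|^2\in L^1$ and $\tfrac{d}{dt}\|Bx\|^2\le\tfrac1\beta(\|Bx\|^2+\|\dot x\|^2)\in L^1$, where I use $\|\tfrac{d}{dt}Bx\|\le\tfrac1\beta\|\dot x\|$ from Remark~\ref{rem-abs-cont}(b), it gives $Bx(t)\to0$. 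Then $\ddot x=-\gamma\dot x-\lambda Bx\to0$ since $\gamma,\lambda$ are bounded.

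For (iii) I verify the two hypotheses of Lemma~\ref{opial}. Because $\mathcal{E}$ is nonincreasing and bounded below it converges; letting $t\to\infty$ in $\xi\gamma(t)h(t)=\mathcal{E}(t)-\tfrac12\|\dot x\|^2-\xi\langle x-x^*,\dot x\rangle$ and using $\dot x\to0$, the boundedness of $x$, and $\gamma(t)\to\ul\gamma>0$, I conclude that $\lim_{t\to\infty}h(t)$, hence $\lim_{t\to\infty}\|x(t)-x^*\|$, exists for every $x^*\in\zer B$, which is the first Opial hypothesis. For the second, if $x(t_n)\rightharpoonup\bar x$ then $Bx(t_n)\to0$ by the convergence just established; since $B$ is monotone and continuous it is maximally monotone, so its graph is sequentially closed for the weak$\times$strong topology, whence $B\bar x=0$, i.e. $\bar x\in\zer B$. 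Lemma~\ref{opial} then yields weak convergence of $x(t)$ to some element of $\zer B$, completing (iii).
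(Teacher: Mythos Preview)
Your proof is correct and complete, but it takes a genuinely different route from the paper's. The paper does not build a Lyapunov function of the form $\mathcal{E}=\tfrac12\|\dot x\|^2+\xi\langle x-x^*,\dot x\rangle+\xi\gamma h$; instead, after reaching $\ddot h+\gamma\dot h+\beta\lambda\|Bx\|^2\le\|\dot x\|^2$ via cocoercivity, it substitutes back the equation $\lambda Bx=-(\ddot x+\gamma\dot x)$ to turn $\beta\lambda\|Bx\|^2$ into $\tfrac{\beta}{\lambda}\|\ddot x+\gamma\dot x\|^2$, expands, and uses that $\gamma/\lambda$ is nonincreasing to recognise a total derivative. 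This produces the monotone quantity $t\mapsto\dot h(t)+\gamma(t)h(t)+\tfrac{\beta\gamma(t)}{\lambda(t)}\|\dot x(t)\|^2$, whose integrated form yields $\dot x,\ddot x\in L^2$ in one stroke, with $Bx\in L^2$ only afterwards from the equation. You do the opposite: your Young-inequality step (with $\delta$ in the nonempty window $(\ol\lambda/(\ul\gamma\beta),\ul\gamma)$, guaranteed by the limit inequality $\ul\gamma^2\ge\tfrac{1+\theta}{\beta}\ol\lambda$) gives $\dot x,Bx\in L^2$ directly, and $\ddot x\in L^2$ follows a posteriori from the equation. What your approach buys is a more classical heavy-ball energy structure and a transparent use of (A1) only through the endpoint ratio $\ul\gamma^2/\ol\lambda$; what the paper's approach buys is that it never needs Young's inequality and exploits the full pointwise monotonicity of $\gamma/\lambda$ via a telescoping derivative. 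Parts (ii) and (iii) are then argued identically in both proofs (Lemma~\ref{fejer-cont2} twice, then Opial plus weak--strong closedness of the graph of the maximally monotone operator $B$).
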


\begin{proof} (i) Take an arbitrary $x^*\in\zer B$ and consider for every $t \in [0, +\infty)$ the function $h(t)=\frac{1}{2}\|x(t)-x^*\|^2$. We have $\dot h(t)=\langle x(t)-x^*,\dot x(t)\rangle$ and 
$\ddot h(t)=\|\dot x(t)\|^2+ \<x(t)-x^* , \ddot x(t)\>$ for every $t \in [0,+\infty)$. Taking into account \eqref{dyn-syst}, we get for every $t \in [0,+\infty)$
\begin{equation}\label{eq-h}\ddot h(t) + \gamma(t)\dot h(t) + \lambda(t)\<x(t)-x^* , B(x(t))\> = \|\dot x(t)\|^2.\end{equation}

The cocoercivity of $B$ and the fact that $Bx^*=0$ yields for every $t \in [0, +\infty)$
$$\ddot h(t) + \gamma(t)\dot h(t) + \beta\lambda(t)\|B(x(t))\|^2  \leq \|\dot x(t)\|^2.$$
Taking again into account \eqref{dyn-syst} one obtains for every $t \in [0, +\infty)$
$$\ddot h(t) + \gamma(t)\dot h(t) + \frac{\beta}{\lambda(t)}\|\ddot x(t)+\gamma(t)\dot x(t)\|^2  \leq \|\dot x(t)\|^2$$
or, equivalently,
\begin{equation*}\ddot h(t) + \gamma(t)\dot h(t)  + 
\frac{\beta\gamma(t)}{\lambda(t)}\frac{d}{dt}\big(\|\dot x(t)\|^2\big)  + \left(\frac{\beta\gamma^2(t)}{\lambda(t)}-1\right)||\dot x(t)||^2 +
\frac{\beta}{\lambda(t)}||\ddot x(t)||^2  \leq 0.\end{equation*}
Combining this inequality with 
\begin{equation}\label{d-g-h-x}\frac{\gamma(t)}{\lambda(t)}\frac{d}{dt}\big(\|\dot x(t)\|^2\big)  =  \ 
\frac{d}{dt}\left(\frac{\gamma(t)}{\lambda(t)}\|\dot x(t)\|^2\right)-
\frac{\dot \gamma(t)\lambda(t)-\gamma(t)\dot\lambda(t)}{\lambda^2(t)}\|\dot x(t)\|^2\\
\end{equation}
and \begin{equation}\label{d-g-h}\gamma(t)\dot h(t)=\frac{d}{dt}(\gamma h)(t)-\dot\gamma(t)h(t)\geq \frac{d}{dt}(\gamma h)(t),\end{equation}
it yields for every $t \in [0,+\infty)$
\begin{align*}
& \ddot h(t)  + \frac{d}{dt}(\gamma h)(t) +\\
& \beta\frac{d}{dt}\left(\frac{\gamma(t)}{\lambda(t)}\|\dot x(t)\|^2\right)  + \left(\frac{\beta\gamma^2(t)}{\lambda(t)}+
\beta\frac{-\dot \gamma(t)\lambda(t)+\gamma(t)\dot\lambda(t)}{\lambda^2(t)}-1\right)||\dot x(t)||^2 +
\frac{\beta}{\lambda(t)}||\ddot x(t)||^2  \leq 0.\end{align*}
Now, assumption (A1) delivers for almost every $t \in [0, +\infty)$ the inequality 
\begin{equation}\label{ineq-h-renorm-fin}\ddot h(t)  + \frac{d}{dt}(\gamma h)(t) +
\beta\frac{d}{dt}\left(\frac{\gamma(t)}{\lambda(t)}\|\dot x(t)\|^2\right)  + \theta||\dot x(t)||^2 +
\beta\ol\lambda^{-1}\|\ddot x(t)||^2  \leq 0.\end{equation} 
This implies that the function $t\mapsto \dot h(t)+\gamma(t) h(t)+\beta\frac{\gamma(t)}{\lambda(t)}\|\dot x(t)\|^2$, which is locally absolutely
continuous, is monotonically decreasing. Hence there exists a real number $M$ such that for every $t \in [0, +\infty)$ 
\begin{equation}\label{ineq-h-bound} \dot h(t)+\gamma(t) h(t)+\beta\frac{\gamma(t)}{\lambda(t)}\|\dot x(t)\|^2\leq M, 
\end{equation}
which yields that for every $t \in [0, +\infty)$
$$\dot h(t)+\ul\gamma h(t)\leq M.$$
By multiplying this inequality with $\exp(\ul\gamma t)$ and then integrating from $0$ to $T$, where $T > 0$, one easily obtains
$$h(T)\leq h(0)\exp(-\ul\gamma T)+\frac{M}{\ul\gamma}(1-\exp(-\ul\gamma T)),$$
thus 
\begin{equation}\label{h-bound} h \mbox{ is bounded}\end{equation}
and, consequently, 
\begin{equation}\label{x-bound} \mbox{the trajectory }x \mbox{ is bounded}.\end{equation}

On the other hand, from \eqref{ineq-h-bound}, it follows that for every $t \in [0, +\infty)$
$$ \dot h(t)+\beta\ul\gamma\ol\lambda^{-1}\|\dot x(t)\|^2\leq M,$$
hence 
$$ \<x(t)-x^*,\dot x(t)\>+\beta\ul\gamma\ol\lambda^{-1}\|\dot x(t)\|^2\leq M.$$
This inequality in combination with \eqref{x-bound} yields 
\begin{equation}\label{dotx-bound} \dot x \mbox{ is bounded},\end{equation}
which further implies that
\begin{equation}\label{doth-bound} \dot h \mbox{ is bounded}.\end{equation}

Integrating the inequality \eqref{ineq-h-renorm-fin} we obtain that there exists a real number $N \in \R$ such that for every $t \in [0, +\infty)$
$$\dot h(t)+\gamma(t) h(t)+\beta\frac{\gamma(t)}{\lambda(t)}\|\dot x(t)\|^2+\theta\int_0^t||\dot x(s)||^2ds
+\beta\ol\lambda^{-1}\int_0^t||\ddot x(s)||^2ds\leq N.$$
From here, via \eqref{doth-bound}, we conclude that $\dot x(\cdot),\ddot x(\cdot)\in L^2([0,+\infty); {\cal H})$. 
Finally, from \eqref{dyn-syst} and (A1) we deduce $Bx \in L^2([0,+\infty); {\cal H})$ and the proof of (i) is complete. 

(ii) For every $t \in [0, +\infty)$ it holds
$$\frac{d}{dt}\left(\frac{1}{2}\|\dot x(t)\|^2\right)=\<\dot x(t),\ddot x(t)\>\leq \frac{1}{2}\|\dot x(t)\|^2+\frac{1}{2}\|\ddot x(t)\|^2$$
and Lemma \ref{fejer-cont2} together with (i) lead to $\lim_{t\rightarrow+\infty}\dot x(t)=0$.

Further, by taking into consideration Remark \ref{rem-abs-cont}(b), for every $t \in [0, +\infty)$ we have 
$$\frac{d}{dt}\left(\frac{1}{2}\|B(x(t))\|^2\right)=\<B(x(t)),\frac{d}{dt}(Bx(t))\>\leq \frac{1}{2}\|B(x(t))\|^2+\frac{1}{2\beta^2}\|\dot x(t)\|^2.$$
By using again Lemma \ref{fejer-cont2} and (i) we get  
$\lim_{t\rightarrow+\infty}B(x(t))=0$, while the fact that $\lim_{t\rightarrow+\infty}\ddot x(t)=0$ follows from 
\eqref{dyn-syst} and (A2). 

(iii) We are going to prove that both assumptions in Opial Lemma are fulfilled. The first one concerns the existence 
of $\lim_{t\rightarrow +\infty }\|x(t)-x^*\|$. As seen in the proof of part (i), the function 
$t\mapsto \dot h(t)+\gamma(t) h(t)+\beta\frac{\gamma(t)}{\lambda(t)}\|\dot x(t)\|^2$ is 
monotonically decreasing, thus from (i), (ii) and (A1) we deduce that 
$\lim_{t\rightarrow+\infty} \gamma(t) h(t)$ exists and it is a real number. 
By taking also into account that $\exists \lim_{t\rightarrow+\infty}\gamma(t)\in(0,\infty)$, we obtain 
the existence of $\lim_{t\rightarrow +\infty }\|x(t)-x^*\|$. 

We come now to the second assumption of the Opial Lemma. Let $\ol x$ be a weak sequential cluster point of $x$, that is, there exists 
a sequence $t_n\rightarrow+\infty$ (as $n\rightarrow+\infty$) such that $(x(t_n))_{n\in\N}$ converges weakly to $\ol x$. Since $B$ is a maximally monotone operator (see for instance \cite[Example 20.28]{bauschke-book}), its graph is sequentially closed with respect to 
the weak-strong topology of the product space ${\cal H}\times {\cal H}$. By using also that $\lim_{n\rightarrow+\infty}B(x({t_n}))=0$, we conclude 
that $B\ol x=0$, hence $\ol x\in\zer B$ and the proof is complete. \end{proof}

A standard choice of a cocoercive operator defined on a real Hilbert spaces is $B=\id-T$, where $T: {\cal H}\rightarrow {\cal H}$ is a {\it nonexpansive operator}, that is, a $1$-Lipschitz
continuous operator. As it easily follows from the nonexpansiveness of $T$, $B$ is in this case $1/2$-cocoercive. For this particular operator $B$ the dynamical system \eqref{dyn-syst} becomes 
\begin{equation}\label{dyn-syst-nonexp}\left\{
\begin{array}{ll}
\ddot x(t) + \gamma(t) \dot x(t) + \lambda(t)\big(x(t)-T(x(t))\big)=0\\
x(0)=u_0, \dot x(0)=v_0,
\end{array}\right.\end{equation}
while assumption (A1) reads
\begin{enumerate}
\item[{\rm (A2)}] $\lambda, \gamma :[0,+\infty)\rightarrow (0,+\infty)$ are locally absolutely continuous and there exists $\theta >0$ such that for almost every $t\in [0, +\infty)$ we have  
\begin{equation}\label{h3'-g}\dot\gamma(t)\leq 0\leq\dot\lambda(t) \mbox{ and } \frac{\gamma^2(t)}{\lambda(t)}\geq 2 (1+\theta).\end{equation}
\end{enumerate}

Theorem \ref{conv-th} gives rise to the following result.

\begin{corollary}\label{conv-th-nonexp} Let $T: {\cal H}\rightarrow{\cal H}$ be a nonexpansive operator such that 
$\fix T:=\{u\in {\cal H}:Tu=u\}\neq\emptyset$, $\lambda,\gamma:[0,+\infty)\rightarrow(0,+\infty)$ be functions 
fulfilling {\rm (A2)} and $u_0,v_0\in {\cal H}$. 
Let $x:[0,+\infty)\rightarrow {\cal H}$ be the unique strong global solution of \eqref{dyn-syst-nonexp}. Then the following statements are true: 

(i) the trajectory $x$ is bounded and $\dot x,\ddot x,(\id -T)x\in L^2([0,+\infty); {\cal H})$;  

(ii) $\lim_{t\rightarrow+\infty}\dot x(t)=\lim_{t\rightarrow+\infty}\ddot x(t)=\lim_{t\rightarrow+\infty}(\id -T)(x(t))=0$; 

(iii) $x(t)$ converges weakly to a point in $\fix T$ as $t\rightarrow+\infty$.
\end{corollary}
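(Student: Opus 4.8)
The plan is to derive the corollary as a direct specialization of Theorem \ref{conv-th} applied to the operator $B = \id - T$. The first step is to confirm that this $B$ is $\frac{1}{2}$-cocoercive. Writing out $\langle x-y, Bx-By\rangle = \|x-y\|^2 - \langle x-y, Tx-Ty\rangle$ and $\|Bx-By\|^2 = \|x-y\|^2 - 2\langle x-y, Tx-Ty\rangle + \|Tx-Ty\|^2$, the cocoercivity inequality $\frac{1}{2}\|Bx-By\|^2 \leq \langle x-y, Bx-By\rangle$ collapses, after cancellation, to exactly $\|Tx-Ty\|^2 \leq \|x-y\|^2$, which is the nonexpansiveness of $T$. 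Thus $B$ is $\beta$-cocoercive with $\beta = \frac{1}{2}$, and no slack is lost.

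Next I would record the two structural identifications that make Theorem \ref{conv-th} directly applicable. First, $\zer B = \{u : u - Tu = 0\} = \fix T$, so the hypothesis $\fix T \neq \emptyset$ is literally $\zer B \neq \emptyset$. Second, substituting $\beta = \frac{1}{2}$ into the quantitative condition of (A1), namely $\frac{\gamma^2(t)}{\lambda(t)} \geq \frac{1+\theta}{\beta}$, produces $\frac{\gamma^2(t)}{\lambda(t)} \geq 2(1+\theta)$, which is precisely the quantitative condition in (A2). Since the sign and regularity requirements $\dot\gamma(t) \leq 0 \leq \dot\lambda(t)$ and local absolute continuity are identical in both assumptions, the pairs $(\lambda,\gamma)$ satisfying (A2) are exactly those satisfying (A1) for $\beta = \frac{1}{2}$.

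With these identifications in hand, the system \eqref{dyn-syst-nonexp} is simply \eqref{dyn-syst} with $B = \id - T$, so its unique strong global solution (guaranteed by Theorem \ref{existence-th}, as $B$ is $1$-Lipschitz) is the very trajectory to which Theorem \ref{conv-th} applies. Transcribing the three conclusions of Theorem \ref{conv-th} through the substitution $B = \id - T$ yields at once the corresponding statements of the corollary: boundedness of $x$ together with $\dot x, \ddot x, (\id-T)x \in L^2([0,+\infty);{\cal H})$; the vanishing limits $\lim_{t\to+\infty}\dot x(t) = \lim_{t\to+\infty}\ddot x(t) = \lim_{t\to+\infty}(\id-T)(x(t)) = 0$; and weak convergence of $x(t)$ to a point of $\zer B = \fix T$.

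Because the result is a pure specialization, I do not expect a genuine obstacle. The only computational point requiring care is the cocoercivity verification of the opening step, and even there the nonexpansiveness of $T$ delivers the required estimate exactly, confirming both that $\frac{1}{2}$ is the correct constant and that the translation (A1) $\leftrightarrow$ (A2) is precise rather than merely sufficient.
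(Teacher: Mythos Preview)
Your proposal is correct and follows exactly the paper's approach: the paper simply notes that $B=\id-T$ is $\tfrac{1}{2}$-cocoercive when $T$ is nonexpansive, observes that \eqref{dyn-syst} specializes to \eqref{dyn-syst-nonexp} and (A1) specializes to (A2) for $\beta=\tfrac{1}{2}$, and then records the corollary as an immediate consequence of Theorem~\ref{conv-th}. Your explicit verification of the cocoercivity inequality is more detailed than what the paper actually writes, but otherwise the arguments coincide.
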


\begin{remark}\label{alv-att-nonexp} In the particular case when $\gamma(t)=\gamma > 0$ for all $t\geq 0$ and 
$\lambda(t)=1$ for all $t \in [0, +\infty)$ the dynamical system \eqref{dyn-syst-nonexp} becomes 
\begin{equation}\label{dyn-syst-nonexp-alv-att}\left\{
\begin{array}{ll}
\ddot x(t) + \gamma \dot x(t) + x(t)-T(x(t))=0\\
x(0)=u_0, \dot x(0)=v_0.
\end{array}\right.\end{equation}
The convergence of the trajectories generated by \eqref{dyn-syst-nonexp-alv-att} has been studied in \cite[Theorem 3.2]{att-alv} 
under the condition $\gamma^2>2$. In this case (A2) is obviously fulfilled for an arbitrary 
$0<\theta\leq(\gamma^2-2)/2$. However, different to \cite{att-alv}, we allow in Corollary \ref{conv-th-nonexp} 
nonconstant damping and relaxation functions depending on time. We would also like to notice that in \cite{alvarez2000} an anisotropic damping has 
been considered in the context of approaching the minimization of a smooth convex function via second order dynamical systems. 
\end{remark}

We close the section by addressing an immediate consequence of the above corollary applied to second order dynamical systems governed by averaged operators. 
The operator $R:{\cal H}\rightarrow{\cal H}$ is said to be {\it $\alpha$-averaged} for $\alpha \in (0,1)$, if there exists a nonexpansive operator 
$T:{\cal H}\rightarrow{\cal H}$ such that $R=(1-\alpha)\id+\alpha T$. For $\alpha=\frac{1}{2}$ we obtain as an important representative of this class the
{\it firmly nonexpansive} operators. For properties and insights concerning these families of operators we refer to the monograph \cite{bauschke-book}. 

We consider the dynamical system
\begin{equation}\label{dyn-syst-av}\left\{
\begin{array}{ll}
\ddot x(t) + \gamma(t) \dot x(t) + \lambda(t)\big(x(t)-R(x(t))\big)=0\\
x(0)=u_0, \dot x(0)=v_0
\end{array}\right.\end{equation}
and formulate the assumption
\begin{enumerate}
\item[{\rm (A3)}] $\lambda, \gamma :[0,+\infty)\rightarrow (0,+\infty)$ are locally absolutely continuous and there exists $\theta >0$ such that for almost every $t\in [0, +\infty)$ we have  
\begin{equation}\label{h3''-g}\dot\gamma(t)\leq 0\leq\dot\lambda(t) \mbox{ and } \frac{\gamma^2(t)}{\lambda(t)}\geq 2\alpha (1+\theta).\end{equation}
\end{enumerate}

\begin{corollary}\label{conv-th-av} Let $R: {\cal H}\rightarrow{\cal H}$ be an $\alpha$-averaged operator for $\alpha\in(0,1)$ such 
that $\fix R\neq\emptyset$, $\lambda,\gamma:[0,+\infty)\rightarrow(0,+\infty)$ be functions fulfilling {\rm (A3)} 
and $u_0,v_0\in {\cal H}$. Let $x:[0,+\infty)\rightarrow {\cal H}$ be the unique strong global solution of \eqref{dyn-syst-av}. 
Then the following statements are true: 

(i) the trajectory $x$ is bounded and $\dot x,\ddot x,(\id -R)x\in L^2([0,+\infty); {\cal H})$;  

(ii) $\lim_{t\rightarrow+\infty}\dot x(t)=\lim_{t\rightarrow+\infty}\ddot x(t)=\lim_{t\rightarrow+\infty}(\id -R)(x(t))=0$; 

(iii) $x(t)$ converges weakly to a point in $\fix R$ as $t\rightarrow+\infty$.
\end{corollary}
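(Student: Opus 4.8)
The plan is to reduce the statement directly to Corollary \ref{conv-th-nonexp} by exploiting the defining decomposition of an averaged operator. Writing $R=(1-\alpha)\id+\alpha T$ with $T:{\cal H}\rightarrow{\cal H}$ nonexpansive, we have $\id-R=\alpha(\id-T)$, so the dynamical system \eqref{dyn-syst-av} can be rewritten as
$$\ddot x(t) + \gamma(t)\dot x(t) + \big(\alpha\lambda(t)\big)\big(x(t)-T(x(t))\big)=0,$$
which is precisely \eqref{dyn-syst-nonexp} with the relaxation function $\lambda$ replaced by $\tilde\lambda:=\alpha\lambda$.

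First I would verify that the pair $(\tilde\lambda,\gamma)$ satisfies assumption (A2). Since $\alpha\in(0,1)$ is a constant, $\tilde\lambda=\alpha\lambda$ inherits local absolute continuity and positivity from $\lambda$, and $\dot{\tilde\lambda}=\alpha\dot\lambda\geq 0$ while $\dot\gamma\leq 0$ by (A3). The only genuine computation is the quotient condition: from $\gamma^2(t)/\lambda(t)\geq 2\alpha(1+\theta)$ in \eqref{h3''-g} we obtain $\gamma^2(t)/\tilde\lambda(t)=\gamma^2(t)/(\alpha\lambda(t))\geq 2(1+\theta)$, which is exactly \eqref{h3'-g}. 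Hence (A2) holds with the same $\theta$.

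Next I would apply Corollary \ref{conv-th-nonexp} to the operator $T$ and the functions $(\tilde\lambda,\gamma)$, noting that $\fix T=\fix R$ (indeed $Tu=u\iff Ru=u$, since $Ru-u=\alpha(Tu-u)$ and $\alpha\neq 0$). This immediately yields boundedness of the trajectory together with $\dot x,\ddot x,(\id-T)x\in L^2([0,+\infty);{\cal H})$, the vanishing of $\dot x,\ddot x,(\id-T)x$ as $t\rightarrow+\infty$, and weak convergence of $x(t)$ to some point in $\fix T=\fix R$.

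Finally I would translate the conclusions back to $R$ via $\id-R=\alpha(\id-T)$: since $\alpha$ is a positive constant, $(\id-R)x=\alpha(\id-T)x$ lies in $L^2([0,+\infty);{\cal H})$ and tends to zero, giving (i) and (ii), while (iii) is already the weak convergence statement above. The main -- and essentially only -- obstacle is the bookkeeping of the quotient condition, making sure the factor $\alpha$ is absorbed correctly so that (A3) for $\lambda$ becomes (A2) for $\alpha\lambda$; everything else is an immediate rescaling, and no new dynamical-systems analysis is required.
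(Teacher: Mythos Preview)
Your proof is correct and follows exactly the paper's own argument: decompose $R=(1-\alpha)\id+\alpha T$, rewrite \eqref{dyn-syst-av} as \eqref{dyn-syst-nonexp} with relaxation $\alpha\lambda$, observe that (A3) for $\lambda$ is precisely (A2) for $\alpha\lambda$, and invoke Corollary~\ref{conv-th-nonexp} together with $\fix R=\fix T$. You have in fact spelled out more of the routine verifications (the quotient condition, the translation back via $\id-R=\alpha(\id-T)$) than the paper does.
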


\begin{proof} Since $R$ is $\alpha$-averaged, there exists a nonexpansive operator 
$T:{\cal H}\rightarrow{\cal H}$ such that $R=(1-\alpha)\id+\alpha T$. The conclusion is a direct consequence of Corollary \ref{conv-th-nonexp}, 
by taking into account that \eqref{dyn-syst-av} is equivalent to $$\left\{
\begin{array}{ll}
\ddot x(t) + \gamma(t) \dot x(t) + \alpha\lambda(t)\big(x(t)-T(x(t))\big)=0\\
x(0)=u_0, \dot x(0)=v_0,
\end{array}\right.$$ and $\fix R=\fix T$.  
\end{proof}

\section{Forward-backward second order dynamical systems}\label{sec4}

In this section we approach the monotone inclusion problem
\begin{equation*}
\mbox{find} \ 0 \in A(x) + B(x),
\end{equation*}
where $A:{\cal H}\rightrightarrows {\cal H}$ is a maximally monotone operator and $B:{\cal H}\rightarrow {\cal H}$ is a $\beta$-cocoercive operator for $\beta >0$  
via a second order forward-backward dynamical system with relaxation and damping functions depending on time.

For readers convenience we recall at the beginning some standard notions and results in monotone operator theory (see also \cite{bo-van, bauschke-book, simons}). 
For an arbitrary set-valued operator $A:{\cal H}\rightrightarrows {\cal H}$ we denote by 
$\gr A=\{(x,u)\in {\cal H}\times {\cal H}:u\in Ax\}$ its graph. We use also the notation $\zer A=\{x\in{\cal{H}}:0\in Ax\}$ for the set of zeros of $A$. 
We say that $A$ is monotone, if $\langle x-y,u-v\rangle\geq 0$ for all $(x,u),(y,v)\in\gr A$. A monotone operator $A$ is said to be maximally monotone, if there exists no proper monotone extension of the graph of $A$ on 
${\cal H}\times {\cal H}$. The resolvent of $A$, $J_A:{\cal H} \rightrightarrows {\cal H}$, is defined by $J_A=(\id +A)^{-1}$. If $A$ is maximally monotone, then $J_A:{\cal H} \rightarrow {\cal H}$ is single-valued and 
maximally monotone (see \cite[Proposition 23.7 and Corollary 23.10]{bauschke-book}). For an arbitrary $\gamma>0$ we have (see \cite[Proposition 23.2]{bauschke-book})
\begin{equation}p\in J_{\gamma A}x \ \mbox{if and only if} \ (p,\gamma^{-1}(x-p))\in\gr A.\end{equation}

The operator $A$ is said to be uniformly monotone if there exists an increasing function
$\phi_A : [0,+\infty) \rightarrow [0,+\infty]$ that vanishes only at $0$ and fulfills
$\langle x-y,u-v \rangle \geq \phi_A \left( \| x-y \|\right)$ for all $(x,u), (y,v) \in \gr A$. A popular
class of operators having this property is the one of strongly monotone operators. We say that $A$ is $\gamma$-strongly monotone for $\gamma > 0$, 
if $\langle x-y,u-v\rangle\geq \gamma\|x-y\|^2$ for all $(x,u),(y,v)\in\gr A$. 

For $\eta > 0$ we consider the dynamical system
\begin{equation}\label{dyn-syst-fb}\left\{
\begin{array}{ll}
\ddot x(t) + \gamma(t)\dot x(t) + \lambda(t)\left[x(t)-J_{\eta A}\Big(x(t)-\eta B(x(t))\Big)\right]=0\\
x(0)=u_0, \dot x(0)=v_0.
\end{array}\right.\end{equation}
Further, we consider the following assumption, where $\delta:=\frac{4\beta-\eta}{2\beta}$: 
\begin{enumerate}
\item[{\rm (A4)}] $\lambda, \gamma :[0,+\infty)\rightarrow (0,+\infty)$ are locally absolutely continuous and there exists $\theta >0$ such that for almost every $t\in [0, +\infty)$ we have  
\begin{equation}\label{h3'''-g}\dot\gamma(t)\leq 0\leq\dot\lambda(t) \mbox{ and } \frac{\gamma^2(t)}{\lambda(t)}\geq \frac{2 (1+\theta)}{\delta}.\end{equation}
\end{enumerate}

\begin{theorem}\label{fb-dyn} Let $A:{\cal H}\rightrightarrows {\cal H}$ be a maximally monotone operator and 
$B:{\cal H}\rightarrow {\cal H}$ be a $\beta$-cocoercive operator for $\beta > 0$ 
such that $\zer(A+B)\neq\emptyset$. Let $\eta\in(0,2\beta)$ and set $\delta:=\frac{4\beta-\eta}{2\beta}$. Let 
$\lambda,\gamma:[0,+\infty)\rightarrow(0,+\infty)$ be functions fulfilling {\rm (A4)}, $u_0,v_0\in {\cal H}$ 
and $x:[0,+\infty)\rightarrow {\cal H}$ be the unique strong global solution of \eqref{dyn-syst-fb}. 
Then the following statements are true: 

(i) the trajectory $x$ is bounded and $\dot x,\ddot x,\big(\id -J_{\eta A}\circ(\id -\eta B)\big)x\in L^2([0,+\infty); {\cal H})$;  

(ii) $\lim_{t\rightarrow+\infty}\dot x(t)=\lim_{t\rightarrow+\infty}\ddot x(t)=
\lim_{t\rightarrow+\infty}\big(\id -J_{\eta A}\circ(\id -\eta B)\big)(x(t))=0$; 

(iii) $x(t)$ converges weakly to a point in $\zer(A+B)$ as $t\rightarrow+\infty$;

(iv) if $x^*\in\zer(A+B)$, then $B(x(\cdot))-Bx^*\in L^2([0,+\infty); {\cal H})$, 
$\lim_{t\rightarrow+\infty}B(x(t))=Bx^*$ and $B$ is constant on $\zer(A+B)$; 

(v) if $A$ or $B$ is uniformly monotone, then $x(t)$ converges strongly to the unique point in $\zer(A+B)$ as $t\rightarrow+\infty$.
\end{theorem}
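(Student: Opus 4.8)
The plan is to reduce statements (i)--(iii) to the averaged-operator result of Corollary \ref{conv-th-av}, and then to recover the sharper information in (iv) and (v) by re-opening the forward--backward operator and exploiting the cocoercivity of $B$ that the averaged-operator abstraction hides. First I would introduce $T := J_{\eta A}\circ(\id-\eta B)$. Since $B$ is $\beta$-cocoercive and $\eta\in(0,2\beta)$, the forward step $\id-\eta B$ is $\frac{\eta}{2\beta}$-averaged (its nonexpansive part being $\id-2\beta B$), while $J_{\eta A}$ is firmly nonexpansive, i.e. $\frac12$-averaged. The composition rule for averaged operators (see \cite{bauschke-book}) then gives that $T$ is $\alpha$-averaged with $\alpha=\frac{2\beta}{4\beta-\eta}=\frac1\delta\in(\frac12,1)$. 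Moreover $x=Tx$ is equivalent to $-Bx\in Ax$, so $\fix T=\zer(A+B)$, and assumption (A4) is exactly assumption (A3) for this $\alpha$. As \eqref{dyn-syst-fb} coincides with \eqref{dyn-syst-av} for $R=T$ (and $\id-T$ is Lipschitz, so Theorem \ref{existence-th} guarantees the solution), statements (i), (ii), (iii) follow directly from Corollary \ref{conv-th-av}.

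The crux of (iv) is a quantitative estimate keeping track of the cocoercivity gain; this is the step I expect to be the main obstacle, precisely because the averaged-operator viewpoint used for (i)--(iii) absorbs the slack $\eta(2\beta-\eta)$ that must now be retrieved, forcing a return to the firm nonexpansiveness of the resolvent rather than treating $T$ as a black box. Fixing $x^*\in\zer(A+B)$ and applying firm nonexpansiveness of $J_{\eta A}$ at the points $x-\eta Bx$ and $x^*-\eta Bx^*$, then bounding the resulting right-hand side with $\langle x-x^*,Bx-Bx^*\rangle\geq\beta\|Bx-Bx^*\|^2$, I would obtain $\|Tx-x^*\|^2\leq\|x-x^*\|^2-\eta(2\beta-\eta)\|Bx-Bx^*\|^2$. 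The polarization identity $\langle x-x^*,(\id-T)x\rangle=\tfrac12\big(\|x-x^*\|^2+\|(\id-T)x\|^2-\|Tx-x^*\|^2\big)$ then yields
\begin{equation*}
\langle x-x^*,(\id-T)x\rangle \;\geq\; \frac{\eta(2\beta-\eta)}{2}\,\|Bx-Bx^*\|^2 .
\end{equation*}

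To finish (iv) I would revisit the energy identity $\ddot h+\gamma\dot h+\lambda\langle x-x^*,(\id-T)x\rangle=\|\dot x\|^2$ for $h=\tfrac12\|x-x^*\|^2$, insert the inequality above and integrate on $[0,T]$. Using $\dot x\in L^2$ and the boundedness of $\dot h$ from (i), together with $\int_0^T\gamma\dot h=\gamma(T)h(T)-\gamma(0)h(0)-\int_0^T\dot\gamma h$ controlled by $\dot\gamma\leq0$ and the boundedness of $h,\gamma$, I would conclude $\int_0^{+\infty}\lambda\|Bx-Bx^*\|^2<+\infty$, hence $Bx(\cdot)-Bx^*\in L^2$ since $\lambda$ has a positive lower bound. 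Then feeding $F=\tfrac12\|Bx-Bx^*\|^2$ into Lemma \ref{fejer-cont2}, with $\frac{d}{dt}F\leq\tfrac12\|Bx-Bx^*\|^2+\tfrac1{2\beta^2}\|\dot x\|^2\in L^1$ (Remark \ref{rem-abs-cont}(b) giving $\|\tfrac{d}{dt}Bx\|\leq\beta^{-1}\|\dot x\|$), gives $\lim_{t\to+\infty}Bx(t)=Bx^*$. As this limit does not depend on $x^*$, the value $Bx^*$ is the same for all $x^*\in\zer(A+B)$, i.e. $B$ is constant there.

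For (v), if $A$ or $B$ is uniformly monotone then so is $A+B$ (adding the modulus of one to the monotonicity of the other), whence $A+B$ is strictly monotone and $\zer(A+B)=\{x^*\}$ is a singleton, already identifying the weak limit from (iii). To upgrade to strong convergence with modulus $\phi$: if $B$ is uniformly monotone, then $\phi(\|x(t)-x^*\|)\leq\langle x(t)-x^*,Bx(t)-Bx^*\rangle$, and the right-hand side tends to $0$ because $x(\cdot)-x^*$ is bounded and $Bx(t)\to Bx^*$ by (iv). If instead $A$ is uniformly monotone, I would use the resolvent characterization $\tfrac1\eta(x(t)-p(t))-Bx(t)\in Ap(t)$ with $p(t)=Tx(t)$ against $-Bx^*\in Ax^*$, obtaining $\phi(\|p(t)-x^*\|)\leq\langle p(t)-x^*,\tfrac1\eta(\id-T)x(t)-(Bx(t)-Bx^*)\rangle\to0$ by (ii), (iv) and boundedness of $p(\cdot)-x^*$; since $p(t)-x(t)=-(\id-T)x(t)\to0$, this transfers to $x$. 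In both cases, $\phi$ being increasing and vanishing only at $0$ means $\phi(s_n)\to0$ forces $s_n\to0$, so $x(t)\to x^*$ strongly.
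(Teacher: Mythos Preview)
Your argument is correct. Parts (i)--(iii) coincide with the paper's proof: it too rewrites \eqref{dyn-syst-fb} as \eqref{dyn-syst-av} with $R=J_{\eta A}\circ(\id-\eta B)$, checks that $R$ is $1/\delta$-averaged, notes $\fix R=\zer(A+B)$, and invokes Corollary \ref{conv-th-av}. Part (v) is also essentially the paper's argument; your $p(t)=Tx(t)$ is precisely the paper's $\frac{1}{\lambda(t)}\ddot x(t)+\frac{\gamma(t)}{\lambda(t)}\dot x(t)+x(t)$ by the dynamical equation, so the uniform-monotonicity step is the same computation in different notation.

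Part (iv) is where the routes diverge. The paper works directly with the resolvent inclusion \eqref{conseq-def-res-g}, pairs it via the monotonicity of $A$ against $-Bx^*\in Ax^*$, applies cocoercivity and Young's inequality to reach $\frac{\beta}{2}\|B(x(t))-Bx^*\|^2\leq\langle x(t)-x^*,-\frac{1}{\eta\lambda(t)}\ddot x(t)-\frac{\gamma(t)}{\eta\lambda(t)}\dot x(t)\rangle$, then recognizes the right-hand side as $-\frac{1}{\eta\lambda(t)}(\ddot h+\gamma\dot h-\|\dot x\|^2)$ and integrates. You instead package the resolvent and cocoercivity into the standard forward--backward contraction estimate $\|Tx-x^*\|^2\leq\|x-x^*\|^2-\eta(2\beta-\eta)\|Bx-Bx^*\|^2$, convert it by polarization into a lower bound on $\langle x-x^*,(\id-T)x\rangle$, and feed that into the energy identity $\ddot h+\gamma\dot h+\lambda\langle x-x^*,(\id-T)x\rangle=\|\dot x\|^2$. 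Both routes land on the same integrated inequality (with slightly different constants, $\frac{\beta}{2}$ versus $\frac{\eta(2\beta-\eta)}{2}$), and both finish with Lemma \ref{fejer-cont2}. Your approach has the virtue of isolating a clean pointwise inequality that is independent of the dynamics; the paper's computation stays closer to the differential structure and avoids the polarization detour. Two minor remarks: your appeal to ``boundedness of $\dot h$ from (i)'' really uses (ii) (continuity plus $\dot x(t)\to 0$ gives $\dot x$ bounded, hence $\dot h$ bounded); and the paper proves constancy of $B$ on $\zer(A+B)$ a priori from cocoercivity of $B$ and monotonicity of $A$, rather than deducing it, as you do, from uniqueness of the limit $\lim_{t\to+\infty}B(x(t))$.
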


\begin{proof} (i)-(iii) It is immediate that the dynamical system \eqref{dyn-syst-fb} can be written in the form 
\begin{equation}\label{dyn-syst-av'}\left\{
\begin{array}{ll}
\ddot x(t) + \gamma(t) \dot x(t) + \lambda(t)\big(x(t)-R(x(t))\big)=0\\
x(0)=u_0, \dot x(0)=v_0,
\end{array}\right.\end{equation}
where $R=J_{\eta A}\circ(\id -\eta B).$ According to \cite[Corollary 23.8 and Remark 4.24(iii)]{bauschke-book}, $J_{\eta A}$ is $1/2$-cocoercive. 
Moreover, by \cite[Proposition 4.33]{bauschke-book}, $\id -\eta B$ is $\eta/(2\beta)$-averaged. Combining this with 
\cite[Theorem 3(b)]{og-yam}, we derive that $R$ is $1/\delta$-averaged. The statements (i)-(iii) follow now from 
Corollary \ref{conv-th-av} by noticing that $\fix R=\zer(A+B)$ (see \cite[Proposition 25.1(iv)]{bauschke-book}). 

(iv) The fact that $B$ is constant on $\zer(A+B)$ follows from the cocoercivity of $B$ and the monotonicity of $A$. A proof of this statement when $A$ is 
the subdifferential of a proper, convex and lower semicontinuous function is given for instance in \cite[Lemma 1.7]{abbas-att-arx14}. 

Let be an arbitrary $x^* \in \zer(A+B)$. From the definition of the resolvent we have for every
$t \in [0, +\infty)$
\begin{equation}\label{conseq-def-res-g} -B(x(t))-\frac{1}{\eta\lambda(t)}\ddot x(t)-\frac{\gamma(t)}{\eta\lambda(t)}\dot x(t)\in 
A\left(\frac{1}{\lambda(t)}\ddot x(t)+\frac{\gamma(t)}{\lambda(t)}\dot x(t)+x(t)\right),\end{equation}
which combined with $-Bx^* \in Ax^*$ and the monotonicity of $A$ leads to
\begin{equation}\label{ineq-fb-mon-g} 0\leq\< \frac{1}{\lambda(t)}\ddot x(t)+\frac{\gamma(t)}{\lambda(t)}\dot x(t)+x(t)-x^*, -B(x(t))+ Bx^*-\frac{1}{\eta\lambda(t)}\ddot x(t)-\frac{\gamma(t)}{\eta\lambda(t)}\dot x(t)\>. 
\end{equation}
The cocoercivity of $B$ yields for every $t \in [0, +\infty)$
\begin{align*}
\beta\|B(x(t))-Bx^*\|^2 \leq & \<\frac{1}{\lambda(t)}\ddot x(t)+\frac{\gamma(t)}{\lambda(t)}\dot x(t) , -B(x(t))+Bx^*\>\!\!-\frac{1}{\eta\lambda^2(t)}\|\ddot x(t)+\gamma(t)\dot x(t)\|^2\\
&+\<x(t)-x^* , -\frac{1}{\eta\lambda(t)}\ddot x(t)-\frac{\gamma(t)}{\eta\lambda(t)}\dot x(t)\>\\
\leq & \frac{1}{2\beta}\left\|\frac{1}{\lambda(t)}\ddot x(t)+\frac{\gamma(t)}{\lambda(t)}\dot x(t)\right\|^2+\frac{\beta}{2}\|B(x(t))-Bx^*\|^2\\
& -\frac{1}{\eta\lambda^2(t)}\|\ddot x(t)+\gamma(t)\dot x(t)\|^2 +\<x(t)-x^* , -\frac{1}{\eta\lambda(t)}\ddot x(t)-\frac{\gamma(t)}{\eta\lambda(t)}\dot x(t)\>\\
= & \frac{\eta - 2\beta}{2\eta\beta\lambda^2(t)} \|\ddot x(t)+\gamma(t)\dot x(t)\|^2 +\frac{\beta}{2}\|B(x(t))-Bx^*\|^2\\ 
&+\<x(t)-x^* , -\frac{1}{\eta\lambda(t)}\ddot x(t)-\frac{\gamma(t)}{\eta\lambda(t)}\dot x(t)\>\\
\leq & \frac{\beta}{2}\|B(x(t))-Bx^*\|^2 +\<x(t)-x^* , -\frac{1}{\eta\lambda(t)}\ddot x(t)-\frac{\gamma(t)}{\eta\lambda(t)}\dot x(t)\>.
\end{align*}
For evaluating the last term of the above inequality we use the function $h:[0,+\infty)\rightarrow \R$, $h(t)=\frac{1}{2}\|x(t)-x^*\|^2$, 
already used in the proof of Theorem \ref{conv-th}. 
From \begin{equation}\label{eq-h-p-g}\<x(t)-x^* , -\frac{1}{\eta\lambda(t)}\ddot x(t)-\frac{\gamma(t)}{\eta\lambda(t)}\dot x(t)\>=
-\frac{1}{\eta\lambda(t)}\left(\ddot h(t)+\gamma(t)\dot h(t)-\|\dot x(t)\|^2\right)\end{equation}
we obtain for every $t \in [0, +\infty)$
$$\frac{\beta\lambda(t)}{2}\|B(x(t))-Bx^*\|^2+\frac{1}{\eta}\left(\ddot h(t)+
\gamma(t)\dot h(t)\right) \leq \frac{1}{\eta}\|\dot x(t)\|^2.$$
Taking into account also the relation \eqref{d-g-h} and the bounds for $\lambda$, we get for every $t \in [0, +\infty)$
$$\frac{\beta\ul\lambda}{2}\|B(x(t))-Bx^*\|^2+\frac{1}{\eta}\left(\ddot h(t)+
\frac{d}{dt}(\gamma h)(t)\right) \leq \frac{1}{\eta}\|\dot x(t)\|^2.$$
After integration we obtain that for every $T \in [0,+\infty)$
\begin{align*}
\frac{\beta\ul\lambda}{2}\int_0^T\|B(x(t))-Bx^*\|^2dt+\frac{1}{\eta}\left(\dot h(T)-\dot h(0)+\gamma(T) h(T)-\gamma(0) h(0)\right) \leq \frac{1}{\eta} \int_0^T \|\dot x(t)\|^2 dt.
\end{align*}
Since $\dot x \in L^2([0,+\infty); {\cal H})$, $\gamma$ has a positive upper bound, $h(T)\geq 0, \gamma(T)\geq 0$ for every $T \in [0,+\infty)$ and 
$\lim_{T\rightarrow+\infty}\dot h(T)=0$, it follows that $B(x(\cdot))-Bx^*\in L^2([0,+\infty); {\cal H})$. 

Further, by taking into consideration Remark \ref{rem-abs-cont}(b), we have
\begin{align*}
\frac{d}{dt}\left(\frac{1}{2}\|B(x(t))-Bx^*\|^2\right)=  \<B(x(t))-Bx^*,\frac{d}{dt}(Bx(t))\> \leq \frac{1}{2}\|B(x(t))-Bx^*\|^2+\frac{1}{2\beta^2}\|\dot x(t)\|^2
\end{align*}
and from here, in the light of Lemma \ref{fejer-cont2},  it follows that $\lim_{t\rightarrow+\infty}B(x(t))=Bx^*$. 

(v) Let $x^*$ be the unique element of $\zer(A+B)$. For the beginning we suppose that $A$ is uniformly monotone with corresponding function $\phi_A:[0,+\infty) \rightarrow [0,+\infty]$, which is increasing and 
vanishes only at $0$. 

By similar arguments as in the proof of statement (iv), for every $t\in [0,+\infty)$ we have
\begin{align*}
&\phi_A\left(\left\|\frac{1}{\lambda(t)}\ddot x(t)+\frac{\gamma(t)}{\lambda(t)}\dot x(t)+x(t)-x^*\right\|\right)\leq\\
&\< \frac{1}{\lambda(t)}\ddot x(t)+\frac{\gamma(t)}{\lambda(t)}\dot x(t)+x(t)-x^*, -B(x(t))+ Bx^*-\frac{1}{\eta\lambda(t)}\ddot x(t)-\frac{\gamma(t)}{\eta\lambda(t)}\dot x(t)\>,
\end{align*}
which combined with the monotonicity of $B$ yields 
\begin{align*}
& \phi_A\left(\left\|\frac{1}{\lambda(t)}\ddot x(t)+\frac{\gamma(t)}{\lambda(t)}\dot x(t)+x(t)-x^*\right\|\right) \leq\\
& \<\frac{1}{\lambda(t)}\ddot x(t)+\frac{\gamma(t)}{\lambda(t)}\dot x(t) , -B(x(t))+Bx^*\>  -\frac{1}{\eta\lambda^2(t)}\|\ddot x(t)+\gamma(t)\dot x(t)\|^2+\\
& \<x(t)-x^* , -\frac{1}{\eta\lambda(t)}\ddot x(t)-\frac{\gamma(t)}{\eta\lambda(t)}\dot x(t)\> \leq \\
& \<\frac{1}{\lambda(t)}\ddot x(t)+\frac{\gamma(t)}{\lambda(t)}\dot x(t) , -B(x(t))+Bx^*\> +\<x(t)-x^* , -\frac{1}{\eta\lambda(t)}\ddot x(t)-\frac{\gamma(t)}{\eta\lambda(t)}\dot x(t)\>.
\end{align*}
As $\lambda$ and $\gamma$ are bounded by positive constants, by using (i)-(iv) it follows that the right-hand side of the last inequality converges to 0 as $t\rightarrow+\infty$. Hence  
$$\lim_{t\rightarrow+ \infty}\phi_A\left(\left\|\frac{1}{\lambda(t)}\ddot x(t)+\frac{\gamma(t)}{\lambda(t)}\dot x(t)+x(t)-x^*\right\|\right)=0$$ 
and the properties of the function $\phi_A$ allow to conclude that 
$\frac{1}{\lambda(t)}\ddot x(t)+\frac{\gamma(t)}{\lambda(t)}\dot x(t)+x(t)-x^*$ converges strongly to $0$ as $t\rightarrow + \infty$.
By using again the boundedness of $\lambda$ and $\gamma$ and assumption (ii) we obtain that $x(t)$ converges strongly to $x^*$ as $t\rightarrow+\infty$. 

Finally, suppose that $B$ is uniformly monotone with corresponding function $\phi_B:[0,+\infty) \rightarrow [0,+\infty]$, which is 
increasing  and vanishes only at $0$. The conclusion follows by letting $t$ in the inequality
$$\<x(t)-x^*,B(x(t))-Bx^*\>\geq \phi_B(\|x(t)-x^*\|) \ \forall t \in [0, +\infty)$$ 
converge to $+\infty$ and by using that $x$ is bounded and $\lim_{t\rightarrow+\infty} (B(x(t)-Bx^*)=0$.  
\end{proof}

\begin{remark}\label{eta=2beta} We would like to emphasize the fact that the statements in Theorem \ref{fb-dyn} remain valid also  for $\eta:=2\beta$. 
Indeed, in this case the cocoercivity of $B$ implies that $\id-\eta B$ is nonexpansive, hence the operator $R = J_{\eta A}\circ(\id -\eta B)$ used 
in the proof is nonexpansive, too, and so the statements in (i)-(iii) follow from Corollary \ref{conv-th-nonexp}. Furthermore, the proof of the statements (iv) and (v) 
can be repeated also for $\eta=2\beta$.  
\end{remark}

In the remaining of this section we turn our attention to optimization problems of the form
\begin{equation*}
\min_{x \in {\cal H}} f(x) + g(x),
\end{equation*}
where $f:{\cal H}\rightarrow\R\cup\{+\infty\}$ is a proper, convex and lower semicontinuous function and $g:{\cal H}\rightarrow \R$ is a convex and 
(Fr\'{e}chet) differentiable function with $\frac{1}{\beta}$-Lipschitz continuous gradient for $\beta > 0$.

We recall some standard notations and facts in convex analysis. For a proper, convex and 
lower semicontinuous function $f:{\cal H}\rightarrow\R\cup\{+\infty\}$, its (convex) subdifferential at $x\in {\cal H}$ is defined as
$$\partial f(x)=\{u\in {\cal H}:f(y)\geq f(x)+\<u,y-x\> \ \forall y\in {\cal H}\}.$$ When seen as a set-valued mapping, it is a 
maximally monotone operator (see \cite{rock}) and its resolvent is given by $J_{\eta \partial f}=\prox_{\eta f}$ (see \cite{bauschke-book}),
where $\prox_{\eta f}:{\cal H}\rightarrow {\cal H}$,
\begin{equation}\label{prox-def}\prox\nolimits_{\eta f}(x)=\argmin_{y\in {\cal H}}\left \{f(y)+\frac{1}{2\eta}\|y-x\|^2\right\},
\end{equation}
denotes the proximal point operator of $f$ and $\eta>0$. According to \cite[Definition 10.5]{bauschke-book}, $f$ is said to be uniformly convex with modulus function
$\phi:[0,+\infty)\rightarrow[0,+\infty]$, if $\phi$ is increasing, vanishes only at $0$ and fulfills
$f(\alpha x+(1-\alpha)y)+\alpha(1-\alpha)\phi(\|x-y\|)\leq \alpha f(x)+(1-\alpha)f(y)$ for all $\alpha\in(0,1)$ and 
$x,y\in \dom f:=\{x\in {\cal H}:f(x)<+\infty\}$. Notice that if this inequality holds for $\phi=(\nu/2)|\cdot|^2$ for $\nu>0$, then $f$ is said to be 
$\nu$-strongly convex. 

In the following statement we approach the minimizers of $f+g$ via the second order dynamical system
\begin{equation}\label{dyn-syst-fb-opt}\left\{
\begin{array}{ll}
\ddot x(t) + \gamma(t) \dot x(t) + \lambda(t)\left[x(t)-\prox_{\eta f}\Big(x(t)-\eta \nabla g(x(t))\Big)\right]=0\\
x(0)=u_0, \dot x(0)=v_0.
\end{array}\right.\end{equation}

\begin{corollary}\label{fb-dyn-opt} Let $f:{\cal H}\rightarrow\R\cup\{+\infty\}$ by a proper, convex and 
lower semicontinuous function and $g:{\cal H}\rightarrow \R$ be a convex and (Fr\'{e}chet) differentiable function with  $\frac{1}{\beta}$-Lipschitz continuous gradient for $\beta >0$
such that $\argmin_{x\in {\cal H}}\{f(x)+g(x)\}\neq\emptyset$. Let $\eta\in(0,2\beta]$ and set $\delta:=\frac{4\beta-\eta}{2\beta}$. Let 
$\lambda, \gamma :[0,+\infty)\rightarrow(0,+\infty)$ be functions fulfilling {\rm (A4)}, $u_0,v_0\in {\cal H}$ and 
$x:[0,+\infty)\rightarrow {\cal H}$ be the unique strong global solution of \eqref{dyn-syst-fb-opt}. 
Then the following statements are true: 

(i) the trajectory $x$ is bounded and $\dot x,\ddot x,\big(\id -\prox_{\eta f}\circ(\id -\eta \nabla g)\big)x\in L^2([0,+\infty); {\cal H})$;  

(ii) $\lim_{t\rightarrow+\infty}\dot x(t)=\lim_{t\rightarrow+\infty}\ddot x(t)=
\lim_{t\rightarrow+\infty}\big(\id -\prox_{\eta f}\circ(\id -\eta \nabla g)\big)(x(t))=0$; 

(iii) $x(t)$ converges weakly to a minimizer of $f+g$  as $t\rightarrow+\infty$;

(iv) if $x^*$ is a minimizer of $f+g$, then $\nabla g(x(\cdot))-\nabla g (x^*)\in L^2([0,+\infty); {\cal H})$, 
$\lim_{t\rightarrow+\infty}$ $\nabla g(x(t))=\nabla g(x^*)$ and $\nabla g$ is constant on $\argmin_{x\in {\cal H}}\{f(x)+g(x)\}$; 

(v) if $f$ or $g$ is uniformly convex, then $x(t)$ converges strongly to the unique minimizer of $f+g$ as $t\rightarrow+\infty$.
\end{corollary}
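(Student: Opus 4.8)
The plan is to deduce the corollary from Theorem \ref{fb-dyn} by specializing the maximally monotone operator and the cocoercive operator to $A=\partial f$ and $B=\nabla g$, respectively. First I would record the two standard facts that make this substitution legitimate: since $f$ is proper, convex and lower semicontinuous, its subdifferential $\partial f$ is maximally monotone (see \cite{rock}), and since $g$ is convex and (Fr\'echet) differentiable with $\frac{1}{\beta}$-Lipschitz continuous gradient, the Baillon--Haddad Theorem (already invoked in Section \ref{sec3}) guarantees that $\nabla g$ is $\beta$-cocoercive. Thus the hypotheses of Theorem \ref{fb-dyn} are met with this choice of $A$ and $B$.

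Next I would check that the two dynamical systems coincide and that the two solution sets agree. Because $J_{\eta\partial f}=\prox_{\eta f}$, the system \eqref{dyn-syst-fb-opt} is precisely \eqref{dyn-syst-fb} with $A=\partial f$ and $B=\nabla g$, so its unique strong global solution is the trajectory $x$ in the statement. It then remains to identify $\zer(\partial f+\nabla g)$ with $\argmin_{x\in{\cal H}}\{f(x)+g(x)\}$. Since $g$ is finite-valued and continuous on all of ${\cal H}$, the subdifferential sum rule gives $\partial(f+g)=\partial f+\nabla g$, and Fermat's rule then yields $0\in\partial f(x)+\nabla g(x)$ if and only if $x$ minimizes $f+g$; in particular the nonemptiness hypothesis $\argmin_{x\in{\cal H}}\{f(x)+g(x)\}\neq\emptyset$ is exactly $\zer(\partial f+\nabla g)\neq\emptyset$. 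With these identifications, statements (i)--(iv) are immediate transcriptions of the corresponding statements of Theorem \ref{fb-dyn}, the case $\eta=2\beta$ being covered by Remark \ref{eta=2beta}.

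For statement (v) the only additional ingredient is the translation of uniform convexity into uniform monotonicity. If $f$ is uniformly convex with modulus $\phi$, then $\partial f$ is uniformly monotone with an appropriate modulus (proportional to $\phi$); likewise, if $g$ is uniformly convex then $\nabla g$ is uniformly monotone. In either case the corresponding hypothesis of Theorem \ref{fb-dyn}(v) holds, so $x(t)$ converges strongly to the unique minimizer of $f+g$ as $t\rightarrow+\infty$.

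I do not expect a genuine obstacle here, since the whole argument is a dictionary translation from the language of convex optimization to that of monotone operators, every step resting on results stated earlier or on standard convex-analytic facts. The only points requiring a little care are the validity of the sum rule $\partial(f+g)=\partial f+\nabla g$, which however holds without a constraint qualification precisely because $g$ is real-valued and differentiable on the whole space, and the exact modulus in the passage from uniform convexity to uniform monotonicity, which is in fact irrelevant for the conclusion since Theorem \ref{fb-dyn}(v) uses only the qualitative properties of the modulus (increasing and vanishing only at $0$).
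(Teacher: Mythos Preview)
Your proposal is correct and follows essentially the same route as the paper: specialize Theorem \ref{fb-dyn} (together with Remark \ref{eta=2beta} for $\eta=2\beta$) to $A=\partial f$, $B=\nabla g$, identify $\zer(\partial f+\nabla g)=\argmin(f+g)$, and for (v) pass from uniform convexity to uniform monotonicity. The paper's proof is terser but substantively identical; the only detail it adds is the explicit modulus $2\phi$ (citing \cite[Example 22.3(iii)]{bauschke-book}) for the uniform monotonicity of $\partial f$.
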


\begin{proof} The statements are direct consequences of the corresponding ones from Theorem \ref{fb-dyn} (see also Remark \ref{eta=2beta}), by choosing
$A:=\partial f$ and $B:=\nabla g$, by taking into account that
$$\zer(\partial f+\nabla g)=\argmin_{x\in {\cal H}}\{f(x)+g(x)\}.$$ 
For statement (v) we also use the fact that if $f$ is uniformly convex 
with modulus $\phi$, then $\partial f$ is uniformly monotone with modulus $2\phi$ (see \cite[Example 22.3(iii)]{bauschke-book}).  
\end{proof}

\begin{remark}\label{alv-att-opt} Consider again the setting in Remark \ref{alv-att-nonexp}, namely, when 
$\gamma(t)=\gamma > 0$ for every $t\geq 0$cand $\lambda(t)=1$ for every $t \in [0,+\infty)$. 
Furthermore, for $C$ a nonempty, convex, closed subset of ${\cal H}$, let $f=\delta_C$ be the indicator function of $C$, which is defined as being equal to $0$ for $x\in C$ and to $+\infty$, else. 
The dynamical system \eqref{dyn-syst-fb-opt} attached in this setting to the minimization of $g$ over $C$ becomes
\begin{equation}\label{dyn-syst-fb-opt-alv-att}\left\{
\begin{array}{ll}
\ddot x(t) + \gamma \dot x(t) + x(t)-P_C\big(x(t)-\eta \nabla g(x(t))\big)=0\\
x(0)=u_0, \dot x(0)=v_0,
\end{array}\right.\end{equation}
where $P_C$ denotes the projection onto the set $C$. 

The asymptotic convergence of the trajectories of \eqref{dyn-syst-fb-opt-alv-att} has been studied in \cite[Theorem 3.1]{att-alv} under the conditions $\gamma^2>2$ and $0<\eta\leq2\beta$.
In this case assumption (A4) trivially holds by choosing $\theta$ such that $0<\theta\leq(\gamma^2-2)/2\leq(\delta\gamma^2-2)/2$. 
Thus, in order to verify (A4) in case  $\lambda(t)=1$ for every $t \in [0,+\infty)$ one needs
to equivalently assume that $\gamma^2>2/\delta$. Since $\delta \geq 1$, this provides a slight improvement over \cite[Theorem 3.1]{att-alv} in what concerns the choice of $\gamma$.
We refer the reader also to \cite{antipin} for an analysis of the convergence rates of trajectories of the dynamical system \eqref{dyn-syst-fb-opt-alv-att} when $g$ is endowed with supplementary properties.
\end{remark}

For the two main convergence statements provided in this section it was essential to choose the step size $\eta$ in the interval $(0, 2\beta]$ (see Theorem \ref{fb-dyn}, Remark \ref{eta=2beta} and Corollary \ref{fb-dyn-opt}).
This, because of the fact that in this way we were able to guarantee for the generated trajectories the existence of the limit $\lim_{t\rightarrow+\infty}\|x(t)-x^*\|^2$, where $x^*$ denotes  
a solution of the problem under investigation. It is interesting to observe that, when dealing with convex optimization 
problems, one can go also beyond this classical restriction concerning the choice of the step size (a similar phenomenon has been 
reported also in \cite[Section 5.2]{abbas-att-arx14}). This is pointed out in the following result, 
which is valid under the assumption
\begin{enumerate}
\item[{\rm (A5)}] $\lambda, \gamma :[0,+\infty)\rightarrow (0,+\infty)$ are locally absolutely continuous and there exists $\theta >0$ such that for almost every $t\in [0, +\infty)$ we have  
\begin{equation}\label{h3''''-g}\dot\gamma(t)\leq 0\leq\dot\lambda(t) \mbox{ and } \frac{\gamma^2(t)}{\lambda(t)}\geq  \eta\theta+\frac{\eta}{\beta}+1.\end{equation}
\end{enumerate}
and for the proof of which we use instead of $\|x(\cdot)-x^*\|^2$ a modified energy functional. 

\begin{corollary}\label{fb-dyn-opt2} Let $f:{\cal H}\rightarrow\R\cup\{+\infty\}$ by a proper, convex and 
lower semicontinuous function and $g:{\cal H}\rightarrow \R$ be a convex and (Fr\'{e}chet) differentiable function with $\frac{1}{\beta}$-Lipschitz continuous gradient for $\beta >0$ such that $\argmin_{x\in {\cal H}}\{f(x)+g(x)\}\neq\emptyset$. 
Let be $\eta>0$, $\lambda, \gamma :[0,+\infty)\rightarrow(0,+\infty)$ be functions fulfilling {\rm (A5)}, 
$u_0,v_0\in {\cal H}$ and $x:[0,+\infty)\rightarrow {\cal H}$ be the unique strong global solution of 
\eqref{dyn-syst-fb-opt}. Then the following statements are true: 

(i) the trajectory $x$ is bounded and $\dot x,\ddot x,\big(\id -\prox_{\eta f}\circ(\id -\eta \nabla g)\big)x\in L^2([0,+\infty); {\cal H})$;  

(ii) $\lim_{t\rightarrow+\infty}\dot x(t)=\lim_{t\rightarrow+\infty}\ddot x(t)=
\lim_{t\rightarrow+\infty}\big(\id -\prox_{\eta f}\circ(\id -\eta \nabla g)\big)(x(t))=0$; 

(iii) $x(t)$ converges weakly to a minimizer of $f+g$  as $t\rightarrow+\infty$;

(iv) if $x^*$ is a minimizer of $f+g$, then $\nabla g(x(\cdot))-\nabla g (x^*)\in L^2([0,+\infty); {\cal H})$, 
$\lim_{t\rightarrow+\infty}$ $\nabla g(x(t))=\nabla g(x^*)$ and $\nabla g$ is constant on $\argmin_{x\in {\cal H}}\{f(x)+g(x)\}$; 

(v) if $f$ or $g$ is uniformly convex, then $x(t)$ converges strongly to the unique minimizer of $f+g$ as $t\rightarrow+\infty$.
\end{corollary}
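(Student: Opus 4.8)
The plan is to mirror the proof of Corollary~\ref{fb-dyn-opt}, isolating the single place where the restriction $\eta\le 2\beta$ was used, namely the construction of a decreasing energy out of $h(t)=\frac12\|x(t)-x^*\|^2$. First I would observe that statements (ii)--(v) never really need a bound on $\eta$: once (i) is in hand, together with the existence of $\lim_{t\to+\infty}\|x(t)-x^*\|$, the arguments of Theorem~\ref{fb-dyn}(ii),(iv),(v) transcribe verbatim, since they only use the $\beta$-cocoercivity of $\nabla g$, the monotonicity of $\partial f$ and Lemmas~\ref{fejer-cont1}--\ref{opial}, all valid for every $\eta>0$; and in (iii) a weak cluster point is a minimizer because $\partial f+\nabla g$ is maximally monotone with weak--strong sequentially closed graph and $\big(\id-\prox_{\eta f}\circ(\id-\eta\nabla g)\big)(x(t))\to 0$. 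Thus the entire new burden is to establish (i) and the Opial hypothesis $\exists\lim_{t\to+\infty}\|x(t)-x^*\|$ under (A5).

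Writing $p(t)=\prox_{\eta f}(x(t)-\eta\nabla g(x(t)))$, so that the system reads $\ddot x+\gamma\dot x+\lambda(x-p)=0$, and setting $\Phi:=f+g$ with $x^*$ a minimizer, the first key step is a \emph{sharp} forward--backward estimate. Combining the strong-convexity optimality inequality of the proximal subproblem (modulus $1/\eta$), the descent lemma for the $\frac1\beta$-Lipschitz gradient $\nabla g$, and the cocoercivity-sharpened lower bound $g(x^*)\ge g(x)+\langle\nabla g(x),x^*-x\rangle+\frac\beta2\|\nabla g(x)-\nabla g(x^*)\|^2$ (Baillon--Haddad), I obtain
\[
\Phi(p)-\Phi(x^*)\le \tfrac1{2\eta}\|x-x^*\|^2-\tfrac1{2\eta}\|p-x^*\|^2+\tfrac12\big(\tfrac1\beta-\tfrac1\eta\big)\|x-p\|^2-\tfrac\beta2\|\nabla g(x)-\nabla g(x^*)\|^2 .
\]
Inserting this, via the elementary identity $\langle x-x^*,x-p\rangle=\frac12\|x-p\|^2+\frac12\|x-x^*\|^2-\frac12\|p-x^*\|^2$, into $\ddot h+\gamma\dot h+\lambda\langle x-x^*,x-p\rangle=\|\dot x\|^2$ produces the pointwise inequality $\ddot h+\gamma\dot h+\lambda\big(1-\tfrac\eta{2\beta}\big)\|x-p\|^2+\eta\lambda\big(\Phi(p)-\Phi(x^*)\big)+\tfrac{\eta\beta}{2}\lambda\|\nabla g(x)-\nabla g(x^*)\|^2\le\|\dot x\|^2$, in which the two last summands on the left are nonnegative.

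The main obstacle is now transparent. For $\eta>2\beta$ the coefficient $1-\tfrac\eta{2\beta}$ is \emph{negative}, and since $\|x-p\|^2=\lambda^{-2}\|\ddot x+\gamma\dot x\|^2$, the naive energy $\dot h+\gamma h+\beta\tfrac{\gamma}{\lambda}\|\dot x\|^2$ of Theorem~\ref{conv-th} would acquire a wrong-signed $\|\ddot x\|^2$ and a wrong-signed multiple of $\tfrac{\gamma^2}{\lambda}\|\dot x\|^2$ (consistently with the fact that $R=\prox_{\eta f}\circ(\id-\eta\nabla g)$ is no longer averaged, so $\|x(t)-x^*\|$ need not be quasi-Fej\'er). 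This is exactly where the modified energy functional is indispensable: instead of relying on $\frac12\|x-x^*\|^2$ alone one must carry the nonnegative gap $\eta\lambda\big(\Phi(p)-\Phi(x^*)\big)$ as an active part of the Lyapunov functional, controlling its evolution through the subdifferential chain rule along the absolutely continuous curve $p(\cdot)$ (using $\tfrac1\eta(x-p)+\nabla g(p)-\nabla g(x)\in\partial\Phi(p)$), so that the dangerous $\lambda(1-\tfrac\eta{2\beta})\|x-p\|^2$ is compensated rather than discarded. I expect this bookkeeping --- matching the derivative of the augmented energy against the negative $\|x-p\|^2$ term without differentiating the nonsmooth $f$ illegitimately --- to be the genuine technical heart.

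Finally, the damping hypothesis (A5), $\tfrac{\gamma^2}{\lambda}\ge\eta\theta+\tfrac\eta\beta+1$, should be shown to be \emph{exactly} the threshold that closes the estimate: after converting the $\tfrac{d}{dt}\|\dot x\|^2$-contributions into total derivatives and dropping the good-sign terms generated by $\dot\gamma\le0\le\dot\lambda$, the summand $1$ absorbs the $\|\dot x\|^2$ on the right of the $h$-identity, the summand $\tfrac\eta\beta$ absorbs the descent-lemma penalty $\tfrac1{2\beta}\|x-p\|^2$, and the residual margin $\eta\theta$ yields a decreasing modified energy together with $\theta\|\dot x\|^2$ integrable. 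From the monotone boundedness of the energy I would read off the boundedness of $x$ and of $\dot x$, then apply Lemma~\ref{fejer-cont1} to the dissipation identity to get $\dot x,\ddot x,(\id-\prox_{\eta f}\circ(\id-\eta\nabla g))x\in L^2$ and, via Lemma~\ref{fejer-cont2}, the limits in (ii); the existence of $\lim_{t\to+\infty}\gamma(t)h(t)$, combined with $\lim_{t\to+\infty}\gamma(t)\in(0,+\infty)$, delivers $\exists\lim_{t\to+\infty}\|x(t)-x^*\|$, and Opial's Lemma~\ref{opial} then gives (iii), whence (iv)--(v) follow as in Theorem~\ref{fb-dyn} and Corollary~\ref{fb-dyn-opt}.
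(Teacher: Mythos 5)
Your reduction of the problem is where the proposal breaks down. You declare the remaining burden to be ``(i) plus the Opial hypothesis $\exists\lim_{t\to+\infty}\|x(t)-x^*\|$'', and at the end you claim to extract this limit from ``the existence of $\lim_{t\to+\infty}\gamma(t)h(t)$''. But the energy argument cannot deliver that: whatever Lyapunov functional closes your pointwise inequality must carry the function-value gap (your $\eta\lambda\big(\Phi(p)-\Phi(x^*)\big)$, the paper's $q(t)=g(x(t))-g(x^*)-\langle\nabla g(x^*),x(t)-x^*\rangle$) as an \emph{additive companion} of $h$, so what one obtains is the existence of $\lim_{t\to+\infty}\gamma(t)\big(\tfrac1\eta h+q\big)(t)$, not of $\lim_{t\to+\infty}\gamma(t)h(t)$. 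Since the gap term has no reason to vanish as $t\to+\infty$ when $x(t)$ converges only weakly (take $g$ quadratic in infinite dimensions), the limit of the sum cannot be split, and $\lim\|x(t)-x^*\|$ remains out of reach --- indeed the paper states just before (A5) that guaranteeing precisely this limit is what forces $\eta\le 2\beta$. Accordingly, the paper's proof of (iii) does not invoke Lemma \ref{opial} at all: it shows that $\lim_{t\to+\infty}E(t,x^*)$ exists for every minimizer $x^*$, where $E(t,x^*)=\frac1{2\eta}\|x(t)-x^*\|^2+g(x(t))-g(x^*)-\langle\nabla g(x^*),x(t)-x^*\rangle$, proves that every weak sequential cluster point is a minimizer (as you do), and then establishes \emph{uniqueness} of the cluster point by the technique of \cite{bolte-2003, abbas-att-arx14}: for two cluster points $x_1^*,x_2^*$, the existence of $\lim_{t\to+\infty}\big(E(t,x_1^*)-E(t,x_2^*)\big)$ yields, upon evaluation along the two subsequences,
$$\frac{1}{\eta}\|x_1^*-x_2^*\|^2+\langle\nabla g(x_2^*)-\nabla g(x_1^*),x_2^*-x_1^*\rangle=0,$$
whence $x_1^*=x_2^*$ by the monotonicity of $\nabla g$. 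This substitute for Opial's lemma is the actual new idea of the proof, and it is absent from your proposal.

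The second gap is that the construction of the decreasing energy is exactly what you leave open (``I expect this bookkeeping \dots to be the genuine technical heart''), and the functional you propose makes it needlessly hard: carrying $\Phi(p(t))-\Phi(x^*)$ forces a nonsmooth chain rule for $f$ along the curve $p(\cdot)$, which you yourself flag as problematic. The paper never differentiates anything involving $f$ or the point $p(t)$. It starts from the same cocoercivity estimate as in Theorem \ref{fb-dyn}(iv) (inequality \eqref{ineq-opt2}), augments $\frac1\eta h$ by the \emph{smooth} Bregman gap $q(t)$ of $g$ at $x(t)$, and uses only $\dot q(t)=\langle\dot x(t),\nabla g(x(t))-\nabla g(x^*)\rangle$, the bound $\langle\dot x(t),\frac{d}{dt}\nabla g(x(t))\rangle\le\frac1\beta\|\dot x(t)\|^2$, and the identities \eqref{d-g-h-x} and \eqref{eq-h-p-g} to arrive at \eqref{ineq-g-q}; assumption (A5) is exactly what makes the coefficient of $\|\dot x(t)\|^2$ there at least $\theta$. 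So your first displayed inequality is correct and your reading of the role of (A5) is right in spirit, but the two essential steps --- a workable Lyapunov functional and the argument replacing Opial's lemma --- are, respectively, unfinished and wrong.
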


\begin{proof} Consider an arbitrary element $x^* \in \argmin_{x\in {\cal H}}\{f(x)+g(x)\}=\zer(\partial f+\nabla g)$. 
Similarly to the proof of Theorem \ref{fb-dyn}(iv), we derive for every $t \in [0, +\infty)$ 
(see the first inequality after \eqref{ineq-fb-mon-g}) 
\begin{align} \label{ineq-opt2}
& \beta\|\nabla g(x(t))-\nabla g(x^*)\|^2 \leq \nonumber \\ 
& \frac{1}{\lambda(t)}\Big(\<\ddot x(t),-\nabla g(x(t))+\nabla g(x^*)\> +\gamma(t)\<\dot x(t) , -\nabla g(x(t))+\nabla g(x^*)\>\Big) - \nonumber \\
& \frac{1}{\eta\lambda^2(t)}\|\ddot x(t)+\gamma(t)\dot x(t)\|^2 + \<x(t)-x^* , -\frac{1}{\eta\lambda(t)}\ddot x(t)-\frac{\gamma(t)}{\eta\lambda(t)}\dot x(t)\>.
\end{align}

In what follows we evaluate the right-hand side of the above inequality and introduce to this end the function 
$$q:[0,+\infty)\rightarrow\R, \ q(t)=g(x(t))-g(x^*)-\<\nabla g(x^*),x(t)-x^*\>.$$ 
Due  to the convexity of $g$ one has $$q(t)\geq 0 \ \forall t\geq 0.$$
Further, for every $t \in [0, +\infty)$
$$\dot q(t)=\<\dot x(t),\nabla g(x(t))-\nabla g(x^*)\>,$$
thus
\begin{equation}\label{ineq-du}\gamma(t)\<\dot x(t) , -\nabla g(x(t))+\nabla g(x^*)\>  = -\gamma(t)\dot q(t)= -\frac{d}{dt}(\gamma q)(t)+\dot\gamma(t) q(t)\leq -\frac{d}{dt}(\gamma q)(t)\end{equation}
On the other hand, for every $t \in [0, +\infty)$
$$\ddot q(t)=\<\ddot x(t),\nabla g(x(t))-\nabla g(x^*)\>+\<\dot x(t),\frac{d}{dt}\nabla g(x(t))\>,$$
hence \begin{equation}\label{ineq-ddu}\<\ddot x(t),-\nabla g(x(t))+\nabla g(x^*)\>\leq -\ddot q(t)+\frac{1}{\beta}\|\dot x(t)\|^2.\end{equation}
We have for almost every $t \in [0, +\infty)$ (see also \eqref{d-g-h-x})
\begin{align} \label{ineq-dx-ddx}
\frac{1}{\lambda(t)}\|\ddot x(t)+\gamma(t)\dot x(t)\|^2  = & \ \frac{1}{\lambda(t)}\|\ddot x(t)\|^2+
\frac{\gamma^2(t)}{\lambda(t)}\|\dot x(t)\|^2\nonumber+\\
&\frac{d}{dt}\left(\frac{\gamma(t)}{\lambda(t)}\|\dot x(t)\|^2\right)-
\frac{\dot \gamma(t)\lambda(t)-\gamma(t)\dot\lambda(t)}{\lambda^2(t)}\|\dot x(t)\|^2.\end{align}
Finally, by multiplying \eqref{ineq-opt2} with $\lambda(t)$ and by using \eqref{ineq-du}, \eqref{ineq-ddu}, \eqref{ineq-dx-ddx} and 
\eqref{eq-h-p-g} we obtain after rearranging the terms for almost every $t \in [0, +\infty)$ that
\begin{align*} 
& \ \beta\lambda(t)\|\nabla g(x(t))-\nabla g(x^*)\|^2+\!\!\frac{d}{dt^2}\left(\frac{1}{\eta}h+q\right)
+\!\frac{d}{dt}\left(\gamma(t)\left(\frac{1}{\eta}h+q\right)\right) + \frac{1}{\eta}\frac{d}{dt}\left(\frac{\gamma(t)}{\lambda(t)}\|\dot x(t)\|^2\right) \nonumber\\
& \ +\left(\frac{\gamma^2(t)}{\eta\lambda(t)}+\frac{-\dot \gamma(t)\lambda(t)+\gamma(t)\dot\lambda(t)}{\eta\lambda^2(t)}-\frac{1}{\beta}-\frac{1}{\eta}\right)\|\dot x(t)\|^2\ +\frac{1}{\eta\lambda(t)}\|\ddot x(t)\|^2\leq 0.
\end{align*} 
This relation gives rise via (A5) to 
\begin{align}\label{ineq-g-q} 
& \ \beta\lambda(t)\|\nabla g(x(t))-\nabla g(x^*)\|^2+\!\!\frac{d}{dt^2}\left(\frac{1}{\eta}h+q\right)
+\!\frac{d}{dt}\left(\gamma(t)\left(\frac{1}{\eta}h+q\right)\right)\nonumber\\ 
+ & \  \frac{1}{\eta}\frac{d}{dt}\left(\frac{\gamma(t)}{\lambda(t)}\|\dot x(t)\|^2\right) +
\theta \|\dot x(t)\|^2\ +\frac{1}{\eta\lambda(t)}\|\ddot x(t)\|^2\leq 0,
\end{align} 
for almost every $t \in [0, +\infty)$. This implies that the function 
\begin{equation}\label{decr-f}t\mapsto \frac{d}{dt}\left(\frac{1}{\eta}h+q\right)(t)
+\gamma(t)\left(\frac{1}{\eta}h+q\right)(t)
+\frac{1}{\eta}\left(\frac{\gamma(t)}{\lambda(t)}\|\dot x(t)\|^2\right)\end{equation} is 
monotonically decreasing. Arguing as in the proof of Theorem \ref{conv-th}, by taking into account that $\lambda,\gamma$ have positive upper and lower bounds, it follows that $\frac{1}{\eta}h+q$, $h$, $q$,  
$x,\dot x,\dot h,\dot q$ are bounded and $\dot x,\ddot x, \big(\id -\prox_{\eta f}\circ(\id -\eta \nabla g)\big)x\in L^2([0,+\infty); {\cal H})$. Furthermore, $\lim_{t\rightarrow+\infty}\dot x(t)=0$. Since 
$\frac{d}{dt} \big(\id -\prox_{\eta f}\circ(\id -\eta \nabla g)\big)x\in L^2([0,+\infty); {\cal H})$ 
(see Remark \ref{rem-abs-cont}(b)), we derive 
from Lemma \ref{fejer-cont2} that $\lim_{t\rightarrow+\infty}\big(\id -\prox_{\eta f}\circ(\id -\eta \nabla g)\big)(x(t))=0$. 
As 
$$\ddot x(t)=-\gamma(t)\dot x(t)-\lambda(t)\big(\id -\prox\nolimits_{\eta f}\circ(\id -\eta \nabla g)\big)(x(t))$$
for every $t \in [0,+\infty)$, we obtain that $\lim_{t\rightarrow+\infty}\ddot x(t)=0$. From \eqref{ineq-g-q} it also follows that 
$\nabla g(x(\cdot))-\nabla g (x^*)\in L^2([0,+\infty); {\cal H})$ and, by applying again Lemma \ref{fejer-cont2}, 
it yields $\lim_{t\rightarrow+\infty}\nabla g(x(t))=\nabla g(x^*)$. In this way the statements (i), (ii) and (iv) are shown.

(iii) Since the function in \eqref{decr-f} is monotonically decreasing, from (i), (ii) and (iv) it follows that the limit
$\lim_{t\rightarrow+\infty}\left(\gamma(t)\left(\frac{1}{\eta}h+u\right)(t)\right)$ exists and it is a real number. From 
$\lim_{t\rightarrow+\infty}\gamma(t)\in(0,+\infty)$ we get that  
$\exists \lim_{t\rightarrow+\infty}\left(\frac{1}{\eta}h+u\right)(t)\in\R$. 

Furthermore, since $x^*$ has been chosen as an arbitrary minimizer of $f+g$, we conclude that for all $x^*\in\argmin_{x\in {\cal H}}\{f(x)+g(x)\}$ the limit 
$$\lim_{t\rightarrow+\infty}E(t,x^*)\in\R$$
exists, where $$E(t,x^*)=\frac{1}{2\eta}\|x(t)-x^*\|^2+g(x(t))-g(x^*)-\<\nabla g(x^*),x(t)-x^*\>.$$

In what follows we use a similar technique as in \cite{bolte-2003} (see, also, \cite[Section 5.2]{abbas-att-arx14}). Since $x(\cdot)$ is bounded, it has at least one weak sequential cluster point. 

We prove first that each weak sequential cluster point of $x(\cdot)$ is a minimizer of $f+g$. Let $x^*\in\argmin_{x\in {\cal H}}\{f(x)+g(x)\}$ and $t_n\rightarrow+\infty$ (as $n\rightarrow+\infty$) 
be such that $(x(t_n))_{n\in\N}$ converges weakly to $\ol x$. Since $(x(t_n),\nabla g(x(t_n)))\in\gr(\nabla g)$, $\lim_{n \rightarrow+\infty}\nabla g(x(t_n))=\nabla g(x^*)$ and 
$\gr(\nabla g)$ is sequentially closed in the weak-strong topology, we obtain $\nabla g(\ol x)=\nabla g(x^*)$. 

From \eqref{conseq-def-res-g} written for  $t=t_n$, $A=\partial f$ and $B=\nabla g$, by letting $n$ converge to $+\infty$ and by using that $\gr(\partial f)$ is sequentially closed in the weak-strong topology, 
we obtain $-\nabla g(x^*)\in\partial f(\ol x)$. This, combined with $\nabla g(\ol x)=\nabla g(x^*)$, delivers $-\nabla g(\ol x)\in\partial f(\ol x)$, 
hence $\ol x\in\zer(\partial f+\nabla g)=\argmin_{x\in {\cal H}}\{f(x)+g(x)\}$.

Next we show that $x(\cdot)$ has at most one weak sequential cluster point, fact which guarantees that it has exactly one weak sequential cluster point.
This implies the weak convergence of the trajectory to a minimizer of $f+g$. 

Let $x_1^*,x_2^*$ be two weak sequential cluster points of $x(\cdot)$. This means that there exist $t_n\rightarrow+\infty$ (as $n\rightarrow+\infty$)
and $t_n'\rightarrow+\infty$ (as $n\rightarrow+\infty$) such that $(x(t_n))_{n\in\N}$ converges weakly to $x_1^*$ (as $n\rightarrow+\infty$) and 
$(x(t_n'))_{n\in\N}$ converges weakly to $x_2^*$ (as $n\rightarrow+\infty$). Since $x_1^*, x_2^*\in \argmin_{x\in {\cal H}}\{f(x)+g(x)\}$, 
we have $\lim_{t\rightarrow+\infty}E(t,x_1^*)\in\R$ and $\lim_{t\rightarrow+\infty}E(t,x_2^*)\in\R$, hence 
$\exists \lim_{t\rightarrow+\infty}(E(t,x_1^*)-E(t,x_2^*))\in\R.$ We obtain 
$$\exists\lim_{t\rightarrow+\infty}\left(\frac{1}{\eta}\<x(t),x_2^*-x_1^*\>+\<\nabla g(x_2^*)-\nabla g(x_1^*),x(t)\>\right)\in\R,$$
which, when expressed by means of the sequences $(t_n)_{n\in\N}$ and $(t_n')_{n\in\N}$, leads to
$$\frac{1}{\eta}\<x_1^*,x_2^*-x_1^*\>+\<\nabla g(x_2^*)-\nabla g(x_1^*),x_1^*\>=\frac{1}{\eta}\<x_2^*,x_2^*-x_1^*\>+\<\nabla g(x_2^*)-\nabla g(x_1^*),x_2^*\>.$$
This is the same with
$$\frac{1}{\eta}\|x_1^*-x_2^*\|^2+\<\nabla g(x_2^*)-\nabla g(x_1^*) , x_2^*-x_1^*\>=0$$
and by the monotonicity of $\nabla g$ we conclude that $x_1^*=x_2^*$. 

(v) The proof of this statement follows in analogy to the one of the corresponding statement of Theorem \ref{fb-dyn}(v) written for $A=\partial f$ and $B=\nabla g$. 
\end{proof}

\begin{remark}\label{opt2} When $\gamma(t)=\gamma >0$ for every $t\geq 0$ and $\lambda(t)=1$ for every $t \in [0,+\infty)$, 
the second inequality in \eqref{h3''''-g} is verified if and only if $\gamma^2>\frac{\eta}{\beta}+1$. 
In other words, (A5) allows in this particular setting a more relaxed choice
for the parameters $\gamma,\eta$ and $\beta$, beyond the standard assumptions $0<\eta\leq 2\beta$ and $\gamma^2>2$ 
considered in \cite{att-alv}.  
\end{remark}

\begin{remark}\label{discr} The explicit discretization of \eqref{dyn-syst-fb-opt} with respect to the time variable $t$, with step size $h_n > 0$, relaxation variable $\lambda_n >0$,
damping variable $\gamma_n > 0$ and initial points $x_0:= u_0$ and $x_1:= v_0$ yields the following iterative scheme
$$\frac{x_{n+1} - 2x_n + x_{n-1}}{h_n^2} + \gamma_n \frac{x_{n+1}-x_n}{h_n} = \lambda_n\left[\prox\nolimits_{\eta f}\Big(x_n-\eta \nabla g(x_n)\Big) - x_n\right] \ \forall n \geq 1.$$
For $h_n=1$ this becomes 
\begin{align*}
x_{n+1} = \left(1- \frac{\lambda_n}{1+\gamma_n} \right)x_n + \frac{\lambda_n}{1+\gamma_n} \prox\nolimits_{\eta f}\Big(x_n-\eta \nabla g(x_n)\Big) + \frac{\lambda_n}{1+\gamma_n}(x_{n} - x_{n-1})
\ \forall n \geq 1,
\end{align*}
which is a relaxed forward-backward algorithm for minimizing $f+g$ with inertial effects. For more on inertial-type forward-backward algorithms we refer the reader to \cite{moudafi-oliny2003}.
\end{remark}

In the following we provide a rate for the convergence for a  convex and  (Fr\'{e}chet) differentiable function
 $g:{\cal H}\rightarrow\R$ with Lipschitz continuous gradient  to its minimum value along the ergodic trajectory generated by 
\begin{equation}\label{dyn-syst-grad}\left\{
\begin{array}{ll}
\ddot x(t) + \gamma(t) \dot x(t) + \lambda(t)\nabla g(x(t))=0\\
x(0)=u_0, \dot x(0)=v_0.
\end{array}\right.\end{equation}
To this end we make the following assumption:
\begin{enumerate}
\item[{\rm (A6)}] $\lambda :[0,+\infty)\rightarrow (0,+\infty)$ is locally absolutely continuous, $\gamma :[0,+\infty)\rightarrow (0,+\infty)$ is twice differentiable and there exists $\zeta >0$ such that for almost every $t\in [0, +\infty)$ we have  
\begin{equation}\label{h3'''''-g} 0 < \zeta \leq \gamma(t)\lambda(t)-\dot\lambda(t), \ \dot \gamma(t)\leq 0 
\mbox{ and }2\dot\gamma(t)\gamma(t)-\ddot \gamma(t)\leq 0.\end{equation}
\end{enumerate}
Let us mention that the following result is in the spirit of a convergence rate statement recently given in \cite[Theorem 1]{gfj} for the objective function values on a sequence iteratively generated by an inertial gradient-type algorithm.

\begin{theorem}\label{rate-g} Let $g:{\cal H}\rightarrow\R$ be a convex and (Fr\'{e}chet) differentiable function with $\frac{1}{\beta}$-Lipschitz 
continuous gradient for $\beta > 0$ such that $\argmin_{x\in{\cal H}} g(x)\neq\emptyset$. Let 
$\lambda, \gamma :[0,+\infty)\rightarrow(0,+\infty)$ be functions fulfilling {\rm (A6)} $u_0,v_0\in {\cal H}$ and 
$x:[0,+\infty)\rightarrow {\cal H}$ be the unique strong global solution of
\eqref{dyn-syst-grad}.

Then for every minimizer $x^*$ of $g$ and every $T> 0$ it holds
\begin{align*}
0\leq &  g\left(\frac{1}{T}\int_0^Tx(t)dt\right)-g(x^*)\leq \\
& \frac{1}{2\zeta T}\left[\|v_0+\gamma(0)(u_0-x^*)\|^2+\left(\frac{\lambda(0)}{\beta}-\dot \gamma(0)\right)\|u_0-x^*\|^2\right].
\end{align*}
\end{theorem}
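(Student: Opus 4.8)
The plan is to reduce the statement, via Jensen's inequality, to a one-sided estimate on the time integral $\int_0^T\big(g(x(t))-g(x^*)\big)\,dt$, and to extract the latter from a Lyapunov functional whose dissipation is controlled from below by $g(x(t))-g(x^*)$. Since $g$ is convex and continuous and $x^*$ minimizes $g$, writing $\overline{x}_T:=\frac{1}{T}\int_0^T x(t)\,dt$ one has
\[
0\le g(\overline{x}_T)-g(x^*)\le \frac{1}{T}\int_0^T\big(g(x(t))-g(x^*)\big)\,dt ,
\]
so it suffices to prove $\zeta\int_0^T\big(g(x(t))-g(x^*)\big)\,dt\le \tfrac12\big[\|v_0+\gamma(0)(u_0-x^*)\|^2+(\tfrac{\lambda(0)}{\beta}-\dot\gamma(0))\|u_0-x^*\|^2\big]$.

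To this end I would introduce the energy functional
\[
W(t)=\tfrac12\|\dot x(t)+\gamma(t)(x(t)-x^*)\|^2-\tfrac12\dot\gamma(t)\|x(t)-x^*\|^2+\lambda(t)\big(g(x(t))-g(x^*)\big),
\]
which is nonnegative for every $t$ because $\dot\gamma(t)\le 0$ and $\lambda(t)>0$ by (A6) and $g(x(t))\ge g(x^*)$. Differentiating $W$ and substituting the governing equation $\ddot x=-\gamma\dot x-\lambda\nabla g(x)$ from \eqref{dyn-syst-grad}, the $\|\dot x(t)\|^2$ terms cancel, the term $\lambda\tfrac{d}{dt}g(x(t))$ produced by $\<\dot x(t),\ddot x(t)\>$ is cancelled by the derivative of $\lambda(g-g^*)$, and the choice of the coefficient $-\tfrac12\dot\gamma$ in front of $\|x-x^*\|^2$ is exactly what annihilates the cross term $\<\dot x(t),x(t)-x^*\>$. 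What survives is
\[
\tfrac{d}{dt}W(t)=-\gamma(t)\lambda(t)\<\nabla g(x(t)),x(t)-x^*\>+\dot\lambda(t)\big(g(x(t))-g(x^*)\big)+\big(\gamma(t)\dot\gamma(t)-\tfrac12\ddot\gamma(t)\big)\|x(t)-x^*\|^2 .
\]
Invoking convexity of $g$ in the form $\<\nabla g(x(t)),x(t)-x^*\>\ge g(x(t))-g(x^*)$ (together with $\gamma(t)\lambda(t)>0$) and then the three conditions of (A6) — $\gamma\lambda-\dot\lambda\ge\zeta$, $\dot\gamma\le 0$ and $2\gamma\dot\gamma-\ddot\gamma\le 0$ — I would obtain, for almost every $t$,
\[
\tfrac{d}{dt}W(t)\le -\big(\gamma(t)\lambda(t)-\dot\lambda(t)\big)\big(g(x(t))-g(x^*)\big)\le -\zeta\big(g(x(t))-g(x^*)\big).
\]

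Integrating this inequality over $[0,T]$ and discarding the nonnegative term $W(T)$ gives $\zeta\int_0^T\big(g(x(t))-g(x^*)\big)\,dt\le W(0)$. It remains to estimate $W(0)$: since $\nabla g(x^*)=0$ and $\nabla g$ is $\tfrac{1}{\beta}$-Lipschitz continuous, the descent lemma yields $g(u_0)-g(x^*)\le\tfrac{1}{2\beta}\|u_0-x^*\|^2$, hence $\lambda(0)\big(g(u_0)-g(x^*)\big)\le\tfrac{\lambda(0)}{2\beta}\|u_0-x^*\|^2$ and therefore $W(0)\le\tfrac12\|v_0+\gamma(0)(u_0-x^*)\|^2+\tfrac12\big(\tfrac{\lambda(0)}{\beta}-\dot\gamma(0)\big)\|u_0-x^*\|^2$. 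Combining this with the Jensen reduction produces the claimed $\mathcal{O}(1/T)$ bound. The main obstacle I anticipate is purely computational: correctly carrying the time dependence of $\gamma$ and $\lambda$ through the differentiation of $W$ and verifying the two simultaneous cancellations (of the $\lambda\tfrac{d}{dt}g$ term and of the cross term $\<\dot x,x-x^*\>$). It is precisely this cancellation structure that forces the coefficient $-\tfrac12\dot\gamma$ and turns the residual $\|x-x^*\|^2$ coefficient into $\gamma\dot\gamma-\tfrac12\ddot\gamma$, explaining why the otherwise unusual hypothesis $2\gamma\dot\gamma-\ddot\gamma\le 0$ appears in (A6) and is used only to discard that residual term.
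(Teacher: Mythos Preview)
Your proposal is correct and follows essentially the same route as the paper: the same Lyapunov functional (up to the harmless constant shift $-\lambda(t)g(x^*)$), the same differentiation and use of the ODE, the same application of convexity and of the three conditions in (A6), followed by integration, the descent lemma for $W(0)$, and Jensen's inequality. Your normalization $\lambda(t)\big(g(x(t))-g(x^*)\big)$ instead of the paper's $\lambda(t)g(x(t))$ is in fact slightly cleaner, as it avoids carrying the extra term $\dot\lambda(t)g(x^*)$ through the integration.
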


\begin{proof} By using \eqref{dyn-syst-grad}, the convexity of $g$ and (A6) we get for almost every $t \in [0,+\infty)$
\begin{align*}
& \frac{d}{dt}\left(\frac{1}{2}\|\dot x(t)+\gamma(t)(x(t)-x^*)\|^2+\lambda(t)g(x(t))-\frac{\dot\gamma(t)}{2}\|x(t)-x^*\|^2\right)\\
= & \<\ddot x(t)+\dot\gamma(t)(x(t)-x^*)+\gamma(t)\dot x(t),\dot x(t)+\gamma(t)(x(t)-x^*)\>\\
& -\frac{\ddot\gamma(t)}{2}\|x(t)-x^*\|^2-
\dot\gamma(t)\<\dot x(t),x(t)-x^*\>+\dot \lambda(t)g(x(t))+\lambda(t)\<\dot x(t),\nabla g(x(t))\>\\
= &-\gamma(t)\lambda(t)\<\nabla g(x(t)) , x(t)-x^*\>+\dot\lambda(t)g(x(t))+\left(\dot\gamma(t)\gamma(t)-\frac{\ddot\gamma(t)}{2}\right)\|x(t)-x^*\|^2\\
\leq & -\gamma(t)\lambda(t)\<\nabla g(x(t)) , x(t)-x^*\>+\dot\lambda(t)g(x(t))\\
\leq & (\dot\lambda(t)-\gamma(t)\lambda(t))(g(x(t))-g(x^*))+\dot\lambda(t)g(x^*)\\
\leq & -\zeta(g(x(t))-g(x^*))+\dot\lambda(t)g(x^*).
\end{align*}
We obtain after integration 
\begin{align*}
\frac{1}{2}\|\dot x(T)+\gamma(T)(x(T)-x^*)\|^2+\lambda(T) g(x(T)) - 
\frac{\dot\gamma(T)}{2}\|x(T)-x^*\|^2 & \\
- \frac{1}{2}\|\dot x(0)+\gamma(0)(x(0)-x^*)\|^2-\lambda(0) g(x(0)) + 
\frac{\dot\gamma(0)}{2}\|x(0)-x^*\|^2 & \\
+ \zeta\int_0^T(g(x(t))-g(x^*))dt & \leq (\lambda(T)-\lambda(0))g(x^*).
\end{align*}
Be neglecting the nonnegative terms in the left-hand side of the  inequality above and by using that $g(x(T))\geq g(x^*)$, it yields
$$\zeta\int_0^T(g(x(t))-g(x^*))dt\leq\frac{1}{2}\|v_0+\gamma(0)(u_0-x^*)\|^2-\frac{\dot\gamma(0)}{2}\|u_0-x^*\|^2+ \lambda(0)(g(u_0)-g(x^*)).$$
The conclusion follows by using
$$g(u_0)-g(x^*)\leq \frac{1}{2\beta}\|u_0-x^*\|^2,$$
which is a consequence of the descent lemma (see \cite[Lemma 1.2.3]{nes} and notice that $\nabla g(x^*) = 0$), and the inequality
$$g\left(\frac{1}{T}\int_0^Tx(t)dt\right)-g(x^*)\leq \frac{1}{T}\int_0^T(g(x(t))-g(x^*))dt,$$
which holds since $g$ is convex.
\end{proof}

\begin{remark}\label{it-func-val} Under assumption (A6), we obtain in the above theorem (only) the convergence of the function 
$g$ along the ergodic trajectory to a global minimum value.
If one is interested also in the (weak) convergence of the trajectory to a minimizer of $g$, this follows via 
Theorem \ref{conv-th} when $\lambda,\gamma$ are assumed to fulfill (A1) (notice that if $x$ converges weakly to a
minimizer of $g$, then from the Cesaro-Stolz Theorem one also obtains the weak convergence of the ergodic 
trajectory $T\mapsto\frac{1}{T}\int_0^Tx(t)dt$ to the same minimizer). 

For $a,a',\rho,\rho'\geq 0$ and $b,b'>0$ fulfilling the inequalities ${b'}^2b>\frac{1}{\beta}$ and $0\leq\rho\leq b'$ one can prove that the functions
$$\lambda(t)=\frac{1}{ae^{-\rho t}+b} \mbox{ and } \gamma(t)=a'e^{-\rho't}+b',$$
verify assumption (A1) in Theorem \ref{conv-th} for $0<\theta\leq b'^2b\beta-1$ 
and assumption (A6) in Theorem \ref{rate-g} for $0<\zeta\leq  \frac{bb'}{(a+b)^2}$. Hence, for this choice of the relaxation and 
damping functions, both convergence of both the objective function $g$ along 
the ergodic trajectory to its global minimum value and (weak) convergence of the trajectory 
to a minimizer of $g$ are guaranteed. 
\end{remark}

{\bf Acknowledgements.} The authors are thankful to the handling editor and two anonymous reviewers for comments and remarks which substantially improved the quality of the paper.

\end{document}